\newcommand{\be}{\begin{equation}}
\newcommand{\ee}{\end{equation}}
\newtheorem{lem}{Lemma}
\newtheorem{thm}{Theorem}
\newtheorem{assum}{Assumption}
\newtheorem{rem}{Remark}
\newcommand{\bfx}{\bm{x}}
\newcommand{\bfe}{\bm{e}}
\newcommand{\bfX}{\bm{X}}
\newcommand{\bfW}{\bm{W}}
\newcommand{\bfY}{\bm{Y}}
\newcommand{\bfZ}{\bm{Z}}
\newcommand{\bfu}{{\bm{v}}}
\newcommand{\ii}{\mathbf{i}}
\begin{document}

\title{
Numerical Methods for a Class of Nonlocal Diffusion Problems 
with the Use of Backward SDEs
\thanks{This material is based upon work supported in part by the U.S.~Air Force of Scientific Research under grant numbers FA9550-11-1-0149 and 1854-V521-12; by the U.S.~Department of Energy, Office of Science, Office of Advanced Scientific Computing Research, Applied Mathematics program under contract numbers ERKJ259, and ERKJE45; by the National Natural Science Foundations of China under grant numbers 91130003 and 11171189; by Natural Science Foundation of Shandong Province under grant number ZR2011AZ002;
and by the Laboratory Directed Research and Development program at the Oak Ridge National Laboratory, which is operated by UT-Battelle, LLC, for the U.S.~Department of Energy under Contract DE-AC05-00OR22725.}
}


\author{G.~Zhang\thanks{Department of Computational and Applied Mathematics, Oak Ridge National Laboratory, 
Oak Ridge, TN 37831({\tt zhangg@ornl.gov, webstercg@ornl.gov}).}
        \and W.~Zhao\thanks{School of Mathematics, Shandong University, Jinan 250100, China ({\tt wdzhao@sdu.edu.cn}).}
         \and C.~G.~Webster\footnotemark[2]
         \and M.~Gunzburger\thanks{Department of Scientific Computing, Florida State University ({\tt gunzburg@fsu.edu}).}
}

\maketitle

\pagestyle{myheadings}
\markboth{G.~Zhang, W.~Zhao, C.~G.~Webster and M.~Gunzburger}
         {Numerical Solution of Nonlocal Diffusion Equations}

\begin{abstract}
We propose a novel numerical approach for nonlocal diffusion equations \cite{Du:2012hp} with integrable kernels, 
based on the relationship between the backward Kolmogorov  equation and backward stochastic differential equations (BSDEs) driven by L\`{e}vy processes with jumps.  The nonlocal diffusion problem under consideration is converted to a BSDE,
for which numerical schemes are developed and applied directly. As a stochastic approach, the proposed method does not require the solution of linear systems, which allows for embarrassingly parallel implementations and also enables adaptive approximation techniques to be incorporated in a straightforward fashion. Moreover, our method is more accurate than classic stochastic approaches due to the use of high-order temporal and spatial discretization schemes. In addition, our approach can handle a broad class of problems with general nonlinear forcing terms as long as they are globally Lipchitz continuous. Rigorous error analysis of the new method is provided as several numerical examples that illustrate the effectiveness and efficiency of the proposed approach.
%
%
\end{abstract}

\begin{keywords}
backward stochastic differential equation with jumps, nonlocal diffusion equations, superdiffusion, compound Poisson process, $\theta$-scheme, adaptive approximation
\end{keywords}




%
%
\section{Introduction}\label{sec:intro}

A diffusion process is deemed nonlocal when the associated underlying L\`{e}vy process does not only consist of Brownian motions. 
The features of nonlocal diffusion has been verified experimentally to be present in a wide variety of applications, including, e.g., contaminant flow in groundwater, plasma physics, and the dynamics of financial markets. 
%
A comprehensive survey of nonlocal diffusion problems is given in \cite{Metzler:2000du}. In this work, we consider a partial-integral diffusion equation (PIDE) representation \cite{Chen:2011fd,Du:2013gf,Du:2012hp} of linear nonlocal diffusion problems. Since it is typically difficult to obtain analytical solutions of such problems, numerical solutions are highly desired in practical applications. There are mainly two types of numerical methods for nonlocal diffusion equations under consideration. 
The first is the family of deterministic approaches, e.g.,~finite-element-type methods \cite{Chen:2011fd,Du:2013gf} and collocation methods, 
whereas the second can be classified as stochastic approaches, e.g., continuous-time random walk (CTRW) methods 
\cite{2011PhRvE..83a2105B,Anonymous:HOINcelb,Metzler:2000du}. 

The deterministic approaches are extensions of existing methods for local partial differential equations (PDEs) that incorporate schemes to discretize the underlying integral operators. The recently developed nonlocal vector calculus \cite{Du:2013jn} provides helpful tools that allow one to analyze nonlocal diffusion problems in a similar manner to analyzing local PDEs; see \cite{Du:2012hp,Du:2013gf} for details.
However, in the context of implicit time-stepping schemes,
the nonlocal operator may result in severe computational difficulties
coming from the dramatic deterioration of the 
sparsity of the stiffness matrices required by the underlying linear systems. 
On the other hand, stochastic approaches, e.g., CTRW methods, are based on the relation between nonlocal diffusion and a class of L\`{e}vy jump processes. Although stochastic methods do not require the solution of linear systems, and the simulations of all paths of the random walk can be easily parallelized, they also have some drawbacks. 
Typically, most stochastic methods are sampling-based Monte Carlo approaches, 
suffering from slow convergence, thus requiring a very large number of samples to achieve small errors.  
Even worse, in the case of a general nonlinear forcing term, 
the nonlocal diffusion equation is no longer the master equation of the  underlying jump processes. 

In this paper, we propose a novel stochastic numerical scheme for the linear nonlocal diffusion problems studied in \cite{2011PhRvE..83a2105B,Chen:2011fd,Anonymous:HOINcelb,Du:2013jn,Du:2013gf,Du:2012hp} based on 
the relationship between the PIDEs and a certain class of backward stochastic differential equations (BSDEs) with jumps.
The existence and uniqueness of solutions for nonlinear BSDEs with and without jumps have been proved in \cite{Pardoux:1990ju} and \cite{Anonymous:fk}, respectively. 
Since then, BSDEs have become important tools in probability theory, stochastic optimal control and mathematical finance \cite{Delong:2013uk}. 
Unlike BSDEs driven by Brownian motions, there are very few numerical schemes proposed 
for BSDEs with jumps, and most of the schemes are focussed on time discretizations only.
For example, a numerical scheme  based on Picard's method was proposed in \cite{Lejay:2007tu}, and a forward Euler scheme was proposed in ~\cite{Bouchard:2008cp,Bouchard:2009il} where the convergence rate was proved to be $(\Delta t)^{\frac{1}{2}}$. 

Our focus will be on the BSDEs corresponding to nonlocal diffusion equations with {\em integrable} jump kernels in unbounded domains. Since the PIDEs of interest do not possess local convection and diffusion terms, the corresponding BSDEs can be simplified, such that, the underlying L\`{e}vy processes are merely {\em compound Poisson processes}. 
Also, motivated by various engineering applications discussed in \cite{Du:2012hp}, our goal is to develop and analyze 
computationally efficient numerical schemes that can achieve high-order accuracy in both temporal and spatial discretization, rather than focus on the scalability of our schemes in solving extremely high-dimensional problems (e.g., in option pricing).
From this perspective, although the BSDEs under consideration is a special case of the equations studied in \cite{Bouchard:2008cp}, high-order fully-discrete schemes and the corresponding error analysis are still missing in the literature. In fact, based on our previous work, 
it is particularly challenging to recover the classic finite element convergence rates in the spatial discretization, even for the simplified BSDEs. 
Thus, the main contributions of this paper are summarized as follows:
\begin{itemize}
\item Development of stable high-order matrix-free numerical schemes for the BSDEs and the corresponding nonlocal diffusion problems; 
\item Analysis of the approximation error of the proposed semi-discrete and fully-discrete schemes, and;
\item Numerical demonstration of the capabilities of handling multiple types of integrable kernels, e.g., symmetric, non-symmetric, or singular kernels.
\end{itemize}

Specifically, 
the BSDEs will be discretized, in the temporal domain, using a $\theta$-scheme extended from our previous works \cite{Zhao:2012ku,Zhao:2006jx} for BSDEs without jumps.
%
%
In particular, the cases where $\theta = 0$, $\theta= \frac{1}{2}$ and $\theta = 1$ correspond to forward Euler, Crank-Nicolson and backward Euler schemes, respectively. 
As discussed in \cite{Zhao:2012ku}, a quadrature rule adapted to the underlying stochastic processes is critical to approximate all the conditional mathematical expectations in our numerical scheme. Thus, we propose a general formulation of the quadrature rule for estimating the conditional expectations with respect to the compound Poisson processes, and a specific form of a certain PIDE can be determined based on regularities of the kernel and forcing term. In \S\ref{sec:ex}, Gauss-Legendre, Gauss-Jacobi and Newton-Cotes rules are substituted into the proposed quadrature rule to approximate different kernels and forcing terms. 
In addition, since the total number of quadrature points grows exponentially with the number of time steps, we construct a piecewise Lagrange interpolation scheme based on a pre-determined spatial mesh, in order to evaluate the integrand of the expectations at all quadrature points. 

%
%
Both theoretical analysis and numerical experiments show that our approach is advantageous for linear nonlocal diffusion equations with integrable kernels on unbounded domains. 
Compared to deterministic approaches, e.g., finite elements and collocation approaches, {\em the proposed methods do {not} require the solution of {dense} linear systems}.
Instead, the PIDE can be solved independently at different spatial grid points on each time level, making it straightforward to incorporate parallel implementation and adaptive spatial approximation. 
Compared to stochastic approaches, our scheme is more accurate than the CTRW method due to the use of the $\theta$-scheme for time discretization, the high-order quadrature rule, and piecewise Lagrange polynomial interpolation for spatial discretization. In addition, our method can handle a broad class of problems with general nonlinear forcing terms as long as they are globally Lipchitz continuous.

The outline of the paper is as follows. In \S\ref{sec:nlocBSDE}, we introduce the mathematical description of the linear nonlocal diffusion equations and the corresponding class of BSDEs under consideration. In \S\ref{sec:scheme}, we propose our numerical schemes for the BSDEs of interest, where the semi-discrete and the fully-discrete approximations are presented in \S\ref{sec:semi} and \S \ref{sec:fully}, respectively. Error estimates for the proposed scheme are proved in \S\ref{sec:err}. Extensive numerical examples and comparisons to existing techniques are given in \S \ref{sec:ex}, which 
are shown to be consistent with the theoretical results and reveal the effectiveness of our approach.  Finally, several concluding remarks are given in \S\ref{sec:con}.

%
%
\section{Nonlocal diffusion models and BSDEs with jumps}\label{sec:nlocBSDE}
This section is dedicated to describing the nonlocal diffusion problem that is the focus of this paper.  In particular, 
in \S\ref{sec:nloc1} we  introduce the definitions of a general nonlocal operator and diffusion equation. The relationship between 
the backward Kolmogorov equation, which is a generalization of the nonlocal diffusion equation of interest,
and the BSDE with jumps, is described in \S\ref{sec:BSDE}. Based on this equivalence, 
a simplified BSDE corresponding to the nonlocal diffusion problem of \S\ref{sec:nloc1} is also given at the end of \S\ref{sec:BSDE}.

\subsection{Nonlocal diffusion equations}\label{sec:nloc1}
Let us recall a nonlocal operator introduced in \cite{Du:2012hp}. 
For a function $u(t,\bm{x}): [0,T] \times \mathbb{R}^d \rightarrow \mathbb{R}$, with 
 $d = 1,2,3$ and $T >0$, we define the action of the linear operator $\mathcal{L}$ on 
$u(t, \bm{x})$ as 
\begin{equation}\label{eq:L}
  \mathcal{L}u = \int_{\mathbb{R}^d} \big(u(t, \bfx+\bfe)-u(t,\bfx)\big) \, \gamma(\bfe) 
  \, d \bfe, \quad \forall (t,\bfx) \in [0,T] \times \mathbb{R}^d,
\end{equation}
where the properties of $\mathcal{L}$ 
depend crucially on the kernel function $\gamma(\bfe): \mathbb{R}^d \rightarrow \mathbb{R}$.
In this work, we focus on a particular class of kernel functions which are nonnegative and integrable, i.e.,
\begin{equation}\label{gamma}
\gamma(\bfe) \ge 0 \;\; \forall \bfe \in \mathbb{R}^d \; \text{ and }\; \int_{\mathbb{R}^d} \gamma(\bfe)\; d\bfe < \infty.
\end{equation} 
Note that $\gamma(\bfe)$ may be {\em symmetric}, i.e.,
$\gamma(\bfe) = \gamma(-\bfe)$ for any $\bfe \in \mathbb{R}^d$, or {\em non-symmetric}, i.e., there exists $\bfe \in \mathbb{R}^d$ such that $\gamma(\bfe) \neq \gamma(-\bfe)$.
The operator $\mathcal{L}$ exhibits nonlocal behavior because, for a fixed $ t\in [0, T]$, the value of $\mathcal{L}u$ at a point $\bfx = (x^1, \ldots, x^d) \in \mathbb{R}^d$ requires information about $u$
at points $\bfx+\bfe \neq \bfx$; 
this is contrasted with local operators, e.g., the value of $\Delta u$ at a point $\bfx$ requires information about $u$
only in an infinitesimal neighborhood of $\bfx$. Our interest in the operator $\mathcal{L}$ is due to its participation in the  {\em nonlocal diffusion equation}
\begin{equation}\label{nloc_diff}
\left\{
\begin{aligned}
& \frac{\partial u}{\partial t}(t,\bfx) - \mathcal{L} u(t,\bfx)= g(t,\bfx,u) & \text{ for } (t,\bfx) \in (0,T] \times \mathbb{R}^d\\
& u(0,\bfx) = u_0(\bfx), \text{ for } \bfx \in \mathbb{R}^d,\\
\end{aligned}\right.
\end{equation}
where $u_0(\bfx)$ is the initial condition and the forcing term $g(t,\bfx,u)$ may be a {\em nonlinear} function of $t, \bfx$ and $u$. 
We remark that  \eqref{nloc_diff} is a nonlocal Cauchy problem defined on the {\em unbounded} spatial domain $\mathbb{R}^d$. 
Also, in the context of kernel functions $\gamma(\bfe)$ that are compactly supported in $\mathbb{R}^d$, the initial-boundary value nonlocal problem with {\em volume constraints}, i.e., constraints applied on a regions of nonzero measure in $\mathbb{R}^d$, has been well studied \cite{Chen:2011fd,Du:2012hp,Du:2013gf}. However, the well-posedness of the corresponding BSDEs with volume constraints is still an open challenge.  As such, we do not impose volume constraints to \eqref{nloc_diff} in this effort. 
%
%

\subsection{BSDEs and backward Kolmogorov equations}\label{sec:BSDE}
Now we discuss the probabilistic representation of the solution of the nonlocal diffusion equation in \eqref{nloc_diff}.
The nonlinear Feynman-Kac formula studied in \cite{Pardoux:1992jo} shows that BSDEs driven by Brownian motion 
provides a probabilistic representation of the solutions of a class of second-order quasi-linear parabolic PDEs. This result was
extended in \cite{Anonymous:fk} to the case of BSDEs driven by general L\'{e}vy processes with jumps. 
Due to the inclusion of L\`{e}vy jumps, the counterpart of a BSDE is a partial integral differential equation (PIDE), i.e., the backward Kolmogorov equation \cite{Hanson:2007ty}. It turns out that such a PIDE is a generalization of the nonlocal diffusion equation in \eqref{nloc_diff}. 
Here we first introduce a general form of a BSDE with jumps and the backward Kolmogorov equation, then 
give a simplified form of the BSDE corresponding to the nonlocal diffusion equation \eqref{nloc_diff} under consideration.

Let $(\Omega, \mathcal{F}, \mathbb{P})$ be a probability space with a filtration $\{\mathcal{F}_t\}_{0\le t\le T}$ 
for a finite terminal time $T>0$. We assume the filtration $\{\mathcal{F}_t\}_{0\le t\le T}$ satisfies the usual hypotheses of completeness, i.e., $\mathcal{F}_0$ 
contains all sets of $\mathbb{P}$-measure zero and maintains right continuity, i.e., $\mathcal{F}_t = \mathcal{F}_{t+}$. Moreover, the filtration is assumed to be generated by two mutually independent processes, i.e., one $m$-dimensional Brownian motion $\bm{W}_t = (W_t^1, \ldots, W_t^m)^{\top}$ and one 
$d$-dimensional Poisson random measure $\mu(A,t)$ on $E \times [0,T]$, where 
$E = \mathbb{R}^d \backslash\{0\}$ is equipped with its Borel field $\mathcal{E}$. 
The compensator of $\mu$ and the resulting compensated Poisson random measure are
 denoted by $\nu(d\bfe,dt) = \lambda(d\bfe)dt$ and $\tilde{\mu} (d\bfe, dt)= \mu(d\bfe, dt) - \lambda(d\bfe)dt$ respectively, such that $\{\tilde{\mu}(A \times [0,t]) = (\mu - \nu)(A \times [0,t])\}_{0\le t \le T}$ is a martingale for all $A \in \mathcal{E}$.
 We also assume that $\lambda(d\bfe)$ is a $\sigma$-finite measure on $(E, \mathcal{E})$ satisfying 
\begin{equation}\label{lambda}
\int_{E} (1 \land |\bfe |^2) \lambda (d \bfe) < + \infty.
\end{equation}
Based on the stochastic basis $(\Omega, \mathcal{F}, \{\mathcal{F}\}_{0\le t \le T},\mathbb{P})$, we introduce the 
backward stochastic differential equation with jumps
\begin{equation}\label{eq:BSDE}\left\{
\begin{aligned}
\bfX_t&= \bfX_0 + \int_0^t \bm{b}(s,\bfX_s) ds +  \int_0^t\bm{\sigma}(s,\bfX_s) d\bfW_s+ \int_0^t\int_{E} \bm{\beta}(s,\bm{X}_{s-},\bfe)\tilde{\mu}(d\bfe,ds)\\
\bfY_t&= \bm{\xi} + \int_t^T \bm{f}(s,\bfX_s, \bfY_s, \bfZ_s, \bm{\Gamma}_s) ds - \int_t^T \bfZ_s d\bfW_s -  \int_t^T \int_{E} \bm{U}_s(\bfe)\tilde{\mu}(d\bfe,ds),
\end{aligned}\right.
\end{equation}
where the solution is the quadruplet $(\bfX_t, \bfY_t, \bfZ_t, \bm{U}_t)$ with $\bfX_t \in \mathbb{R}^d$, $\bfY_t \in \mathbb{R}^q$, 
$\bfZ_t \in \mathbb{R}^{q\times m}$ and $\bm{U}_t \in \mathbb{R}^q$. 
The map
$\bm{b}: [0,T] \times \mathbb{R}^{d} \rightarrow \mathbb{R}^d$ is referred to as the drift coefficient, 
$\bm{\sigma}: [0,T] \times \mathbb{R}^d \rightarrow \mathbb{R}^{d} \times \mathbb{R}^m$ is the local diffusion coefficient,
$\bm{\beta}: [0,T] \times \mathbb{R}^d \times \mathbb{R}^d \rightarrow \mathbb{R}^d$ is the jump coefficient, $\bm{f}:[0,T] \times \mathbb{R}^m \times \mathbb{R}^{d} \times \mathbb{R}^{d\times m} \times \mathbb{R}^d \rightarrow \mathbb{R}^q$ is 
the generator of the BSDE, and the processes $\bm{\Gamma}_s: [0,T] \rightarrow \mathbb{R}^q$ 
is defined by $\bm{\Gamma}_s = \int_{E} \bm{U}_s(\bfe) \eta(\bfe) \lambda(d\bfe)$
for a given bounded function $\eta: \mathbb{R}^d \rightarrow \mathbb{R}$. The terminal condition $\bm{\xi}$, which is an $\mathcal{F}_T$-measurable random vector, is assumed to be a  function of $\bfX_T$, denoted by $\bm{\xi} = \bm{\varphi}(\bfX_T)$. 
Note that the integrals in \eqref{eq:BSDE} with respect to the $m$-dimensional Brownian motion $\bfW_t$ 
and the $d$-dimensional compensated Poisson measure $\tilde{\mu}(d\bfe, dt)$ are 
It\^{o}-type stochastic integrals. 
The first equation in \eqref{eq:BSDE} is the forward SDE system, which is a general L\'{e}vy process, while 
the second equation is a system of BSDEs driven by $\bfX_t$. The quadruplet $(\bfX_t, \bfY_t, \bfZ_t, \bm{U}_t)$ is 
called an $L^2$-adapted solution
if it is $\{\mathcal{F}_t\}$-adapted, square integrable process which satisfies the BSDE in \eqref{eq:BSDE}. 
Under standard assumptions on the given functions $\bm{b}$, $\bm{\sigma}$, $\bm{f}$, $\bm{\varphi}$ and $\bm{\beta}$, the existence and uniqueness of the solution of the system in \eqref{eq:BSDE}
with a nonlinear generator $\bm{f}$ have been proved in \cite{Anonymous:fk}. 

Based on the extension of the nonlinear Feynman-Kac formula for BSDEs proposed in \cite{Anonymous:fk}, 
the adapted solution $(\bfX_t, \bfY_t, \bfZ_t, \bm{U}_t)$ of \eqref{eq:BSDE} can be associated to the unique viscosity solution 
$\bfu(t,\bfx) \in \mathcal{C}([0,T]\times \mathbb{R}^d)$ of the backward Kolmogorov equation, i.e.,
\begin{equation}\label{eq:Kol}
\left\{
\begin{aligned}
&\frac{\partial \bfu}{\partial t}(t,\bfx) + \widetilde{\mathcal{L}}\bfu(t,\bfx) +\bm{f}(t,\bfx,\bfu,\bm{\sigma}\nabla \bfu, \mathcal{B}\bfu)=0, 
\text{ for } (t,\bfx) \in [0,T) \times \mathbb{R}^d\\
&\bfu(T,\bfx) = \bm{\varphi}(\bfx), \text{ for } \bfx \in \mathbb{R}^d.
\end{aligned}
\right.
\end{equation}
In \eqref{eq:Kol}, $\bm{\varphi}(\bfx)$ is the terminal condition at the time $t = T$, and the second-order integral-differential operator $\widetilde{\mathcal{L}}$ is of the form
\begin{equation}\label{eq:L1}
\begin{aligned}
& \widetilde{\mathcal{L}}\bfu(t,\bfx)  = \sum_{i=1}^d b_i(t,\bfx)\frac{\partial \bfu}{\partial x^i}(t,\bfx) + \frac{1}{2}\sum_{i,j = 1}^d (\bm{\sigma}\bm{\sigma}^{\top})_{i,j}(t,\bfx)
\frac{\partial^2 \bfu}{\partial x^i \partial x^j}(t,\bfx)\\
& \hspace{1.0cm} + \int_{E} \left(\bfu(t,\bfx + \bm{\beta}(t,\bfx,\bfe)) -\bfu(t,\bfx) - 
\sum_{i=1}^d \frac{\partial \bfu}{\partial x^i}(t,\bfx) \bm{\beta}(t,\bfx,\bfe) \right) \lambda (d\bfe),
\end{aligned}
\end{equation}
with $\mathcal{B}$ is an integral operator defined as 
\[
\mathcal{B} \bfu(t,\bfx) = \int_{E} \big[\bfu(t,\bfx + \bm{\beta}(t,\bfx,\bfe)) -\bfu(t,\bfx)\big] \eta(\bfe) \lambda(d\bfe). 
\]
The functions $\bm{b}$, $\bm{\sigma}$, $\bm{f}$, $\bm{\varphi}$, $\bm{\beta}$ in \eqref{eq:L1} have the same definitions as in the BSDE in \eqref{eq:BSDE}. Under the condition that $\bfX_{s} = \bfx$ for a fixed $s \in [0, T)$, 
the solution $(\bfY_t, \bfZ_t, \bm{U}_t)$ of the BSDE for $s \le t \le T$
can be represented by
\begin{equation}\label{FK}
\left\{
\begin{aligned}
&\bfY_t = \bfu(t,\bfX_t),\\
&\bfZ_t = \bm{\sigma}(t, \bfX_t) \nabla \bfu(t, \bfX_t), \\
& \bm{U}_t = \bfu(t,\bfX_{t-}+\bm{\beta}(t,\bfX_{t-},\bfe)) - \bfu(t, \bfX_{t-}),
\end{aligned}\right.
\end{equation}
where $\nabla \bfu$ denotes the gradient of $\bfu$ with respect to $\bfX_t$ and $ \bm{\Gamma}_t = \mathcal{B}\bfu(t, \bfX_t)$.
By comparing the operator $\widetilde{\mathcal{L}}$ in \eqref{eq:L1} and the BSDE in \eqref{eq:BSDE}, 
we can see that the incremental $d\bm{X}_t$ consists of three components: the drift term $\bm{b}(t,\bfX_t)dt$, 
the Brownian diffusion $\bm{\sigma}(t,\bfX_t)d\bfW_t$, and the 
jump diffusion $\int_{E} \bm{\beta}(t,\bfX_{t-},\bfe)\tilde{\mu}(d\bfe,dt)$.

Now let us relate the BSDE in \eqref{eq:BSDE} to the nonlocal diffusion equation in \eqref{nloc_diff}.
Without loss of generality, we only consider the case when $\bfY_t$ is a scalar function, i.e., by setting $q = 1$ in \eqref{eq:BSDE};  the 
numerical schemes and analysis presented in this paper can be directly extended to the case of $q > 1$. Observing that the operator $\widetilde{\mathcal{L}}$ in \eqref{eq:L1} is a generalization of the nonlocal operator $\mathcal{L}$ in \eqref{nloc_diff}, we aim to find a simplified form of the BSDE which is the stochastic representation of the nonlocal diffusion equation in \eqref{nloc_diff}. To proceed, we define $\lambda(d\bfe) = \gamma(\bfe)d\bfe$ for $\bfe \in E$ and $\lambda(d\bfe)= 0$ for $\bfe \notin E$, satisfying the condition in \eqref{lambda}.
Due to the integrability assumption of $\gamma(\bfe)$ in \eqref{gamma}, the jump diffusion term in 
\eqref{eq:BSDE} is a {\em compensated compound Poisson process}.
In this case, the compensated Poisson random measure 
$\tilde{\mu}(d\bfe, dt)$ can be represented by
\begin{equation}\label{mu}
\tilde{\mu}(d\bfe, dt) = \mu(d\bfe, dt) - \lambda \rho(\bfe) d\bfe dt,
\end{equation}
 where $\lambda(d \bfe) = \lambda \rho(\bfe)d\bfe$, with $\lambda$ being the intensity of Poisson jumps and $\rho(\bfe)d\bfe$ being the probability
measure of the jump size $\bm{\beta}(t,\bfx, \bfe)$ satisfying $\int_{E} \rho(\bfe) d\bfe =1$. Then, the operator $\widetilde{\mathcal{L}}$ in \eqref{eq:L1} can be simplified to the nonlocal operator ${\mathcal{L}}$ in \eqref{eq:L} by setting $m = 0$, i.e., removing the Brownian motion, and 
\begin{equation}\label{eq:coeff}
\begin{aligned}
&\bm{\sigma} = 0,\;\; \bm{\beta} = \bfe \,\mathcal{I}_{D}(\bfe),\;\; \lambda = \int_{E} \gamma(\bfe) d\bfe,\\
& \rho(\bfe) =\frac{1}{\lambda} \gamma(\bfe),\;\; b_i =  \int_{E} \bfe \gamma(\bfe) d\bfe \;\text{ for } i = 1, \ldots, d,
\end{aligned}
\end{equation}
where $\mathcal{I}_{D}(\bfe)$ is the characteristic function of the domain $D$. In the context of $\bm{\beta} = \bfe\,\mathcal{I}_{D}(\bfe)$, the drift coefficient $\bm{b}$ is a constant vector which indeed helps cancel the first-order derivatives of $u$
in $\widetilde{\mathcal{L}}$. On the other hand, substituting $\gamma(\bfe)$ into the definition of $\bm{b}$, we have
\begin{equation}\label{b}
\int_{0}^t b_i \; ds = \lambda \int_{0}^t \int_{E}  \bfe \rho(\bfe) d\bfe ds = \lambda t \mathbb{E}[\bfe]\; \text{ for } i = 1, \ldots, d,
\end{equation}
which is the compensator of $\bfX_t$. Hence, $\bfX_t$ is just a compound Poisson process under the Poisson measure $\mu(d\bfe,dt)$ 
without compensation. 
Then, by defining $f(t, \bfx, u) = g(T-t, \bfx, u)$ 
and $\varphi(\bfx) = u_0(\bfx)$, with $g$ and $u_0$ being the forcing term and initial condition in \eqref{nloc_diff} respectively,
we obtain the BSDE corresponding to the nonlocal diffusion equation in \eqref{nloc_diff}
\begin{equation}\label{eq:BSDE2}
\left\{
\begin{aligned}
\bm{X}_t&= \bm{X}_0 + \int_0^t \int_{E} \bm{e}\; \mu(d\bm{e},ds),\\
Y_t&= \varphi(\bm{X}_T)+ \int_t^T f(s, \bm{X}_s, Y_s) ds -  \int_t^T \int_{E} U_s(\bm{e})\; \tilde{\mu}(d\bm{e},ds).
\end{aligned}\right.
\end{equation}
According to Theorem 2.1 in \cite{Anonymous:fk}, the well-posedness of the BSDE in \eqref{eq:BSDE2} requires the generator $f$ is globally Lipschitz continuous, i.e., there exists $K>0$ such that
\[
|f(t,\bfx, y) - f(t, \bfx', y')| \le K(|\bfx - \bfx'| + |y-y'|)
\]
for all $0 \le t \le T$ with $\bfx, \bfx' \in \mathbb{R}^d$ and $y,y' \in \mathbb{R}$. 
In this case, 
there exists a unique solution 
$(Y_t, U_t) \in S^2 \times L^2(\tilde{\mu})$ where $S^2$ is the set of $\mathcal{F}_t$-adapted c\`{a}dl\`{a}g processes 
$\{Y_t, 0\le t \le T\}$ such that 
\[
\|Y_t\|_{S^2}^2 = \mathbb{E}\left[\left( \sup_{0\le t \le T} |Y_t| \right)^2 \right]< \infty,
\]
and $L^2(\tilde{\mu})$ is the set of mappings $U: \Omega \times [0,T] \times E \rightarrow \mathbb{R}$ such that 
\[
\| U_t\|_{L^2(\tilde{\mu})}^2 =  \mathbb{E} \left[\int_0^T \int_{E} U_t(\bfe)^2 \lambda(d\bfe) dt\right]  < \infty.
\]
To relate the solution $(Y_t, U_t)$ to the viscosity solution $u$ of the nonlocal diffusion equation in \eqref{nloc_diff}, we assume
$\varphi$ and $f$ are continuous functions satisfying
\[
|f(t,\bfx,0)| \le C (1+|\bfx|^p) \;\; \mbox{and}\;\; |\varphi(\bfx)| \le C (1+|\bfx|^p), \;\;(t,\bfx) \in [0,T]\times \mathbb{R}^d,
\]
for some real  $C, p >0$. Then, under the condition that $\bfX_s = \bfx$ for a fixed $s\in [0,T)$, the solution $(Y_t, U_t)$ for $s\le t \le T$ can be represented by
\[
Y_t = u(T-t, \bfX_t) \;\; \mbox{ and } \;\; U_t = u(T-t,\bfX_{t-}+\bfe) - u(T-t, \bfX_{t-}),
\]
where $u \in \mathcal{C}([0,T]\times \mathbb{R}^d)$ is the viscosity solution of \eqref{nloc_diff} satisfying
\[
|u(t,\bfx)| \le C(1+|\bfx|^p), \;\;(t,\bfx) \in [0,T]\times \mathbb{R}^d,
\]
again, for some real  $C, p >0$. Moreover, if $\varphi$ and $f$ are bounded and uniformly continuous, then $u$ is also bounded and uniformly continuous. 

Now we introduce the following notations which will be used throughout. 
Let $\mathcal{F}_s^{t,\bfx}$ $(t\leq s\leq T)$ be a $\sigma$-field generated by the stochastic process $\{\bfx + \bfX_r - \bfX_t, t \le r \le s \}$
starting from the time-space point $(t,\bfx)$, and set
$\mathcal{F}^{t,\bfx}=\mathcal{F}_T^{t,\bfx}$. Denote by $\mathbb{E}[\,\cdot\,]$
the mathematical expectation and
$\mathbb{E}_s^{t,\bfx}[\,\cdot\,]$ the conditional mathematical expectation under the $\sigma$-field
$\mathcal{F}_s^{t,\bfx}(t\leq s\leq T)$, i.e., 
$\mathbb{E}_s^{t,\bfx}[\,\cdot\,]=\mathbb{E}[\,\cdot\,|\mathcal{F}_s^{t,\bfx}]$.
Particularly, for the sake of notational convenience, when $s=t$, we use 
$\mathbb{E}_t^{\bfx}[\,\cdot\,]$ 
to denote $\mathbb{E}[\,\cdot\, |\mathcal{F}_t^{t,\bfx}]$, i.e., the mathematical expectation under the condition that $\bfX_t = \bfx$. 

\section{Numerical schemes for BSDEs with jumps}\label{sec:scheme}
In this section we propose numerical schemes for the BSDE in \eqref{eq:BSDE2} in order to solve the nonlocal diffusion equation in \eqref{nloc_diff}. Specifically, a semi-discrete scheme for time discretization is studied in \S \ref{sec:semi}, and a fully-discrete scheme is constructed in \S \ref{sec:fully} by incorporating appropriate spatial discretization techniques. 

\subsection{The semi-discrete scheme}\label{sec:semi}
For the time interval $[0,T]$, we introduce the partition
\begin{equation}\label{dt}
\mathcal{T}_\tau =  \big\{ 0=t_0<\cdots<t_N=T \big\}
\end{equation}
with $\Delta t_n = t_{n+1} - t_n$ and $\tau = \max_{0 \le n\le N-1} \Delta t_n$.  
In the time interval $[t_n, t_{n+1}]$ for $0\leq n\leq N-1$, under the condition that
$\bfX_{t_n} = \bfx \in \mathbb{R}^d$, the BSDE in \eqref{eq:BSDE2} can be rewritten as
\begin{equation}\label{s3:e1}
\hspace{-2.1cm}\bfX_s = \bfx + \int_{t_n}^s \int_{E} \bm{e}\; \mu(d\bm{e},dr)\; \text{ for } \; t_n \le s \le t_{n+1},
\end{equation}
\vspace{-0.5cm}
\begin{equation}\label{s3:e1_2}
Y_{t_n}
=Y_{t_{n+1}}+\int_{t_n}^{t_{n+1}} f(s,\bfX_s, Y_s) ds
-\int_{t_n}^{t_{n+1}} \int_{E} U_s(\bfe)\tilde{\mu}(d\bfe,ds).
\end{equation}
Taking the conditional mathematical expectation
$\mathbb{E}_{t_n}^{\bfx}[\,\cdot\,]$ on both sides of \eqref{s3:e1_2}, due to the fact that 
$\int_{t_n}^{t} \int_{E} U_s(\bfe)\tilde{\mu}(d\bfe,ds)$ for $t > t_{n}$ is a martingale, we obtain that 
\begin{equation}
\label{s3:e2}
Y_{t_n}=\mathbb{E}_{t_n}^{\bm{x}}[Y_{t_{n+1}}]+\int_{t_n}^{t_{n+1}}\mathbb{E}_{t_n}^{\bfx}[f(s, \bfX_s, Y_s)]ds,
\end{equation}
where $\mathbb{E}_{t_n}^{\bfx}[Y_{t_n}] = Y_{t_n}(\bfx) = u(T-t_n,\bfx)$, $Y_{t_{n+1}}= Y_{t_{n+1}}(\bfX_{t_{n+1}}) = u(T-t_{n+1}, \bfX_{t_{n+1}})$ and the integrand $\mathbb{E}_{t_n}^{\bfx}[f(s, \bfX_s, Y_s)]$ is a deterministic function of $s \in [t_n, t_{n+1}]$. 
To estimate the integral in \eqref{s3:e2}, several numerical methods for approximating integrals with deterministic integrands can be used. Here, we utilize the $\theta$-scheme proposed 
in \cite{Zhao:2006jx, Zhao:2012ku}, which yields
\begin{equation}\label{s3:e3}
\begin{aligned}
Y_{t_n}=\mathbb{E}_{t_n}^{\bfx}[Y_{t_{n+1}}]
& +(1-\theta)\Delta t_n \mathbb{E}_{t_n}^{\bfx}[f(t_{n+1},\bfX_{t_{n+1}}, Y_{t_{n+1}})]\\
& +\theta\Delta t_n f(t_n, \bfX_{t_n},Y_{t_n}) +R_n,
\end{aligned}
\end{equation}
where $\theta \in [0,1]$ and the residual $R_n$ is given by
\begin{equation}\label{eq:Ry}
\begin{aligned}
R_n=\int_{t_n}^{t_{n+1}}& \Big\{\mathbb{E}_{t_n}^{\bfx}[f(s,\bfX_s,Y_s)]  -\theta
f(t_n, \bfX_{t_n},Y_{t_n})\\
& \quad -(1-\theta)\mathbb{E}_{t_n}^{\bfx}[f(t_{n+1}, \bfX_{t_{n+1}},Y_{t_{n+1}})]\Big\}ds.
\end{aligned}
\end{equation}
Next we propose a semi-discrete scheme for the BSDE in \eqref{eq:BSDE2} as follows: given the random variable $Y_N = Y_{t_N} = \varphi(\bfX_T)$ as the terminal condition, for $n = N-1, \ldots, 1,0$, under the condition that $\bfX_n = \bfx$, the solution $Y_{t_n}$ is approximated by $Y_n$ satisfying
\begin{equation}\label{eq:semi}
\left\{
\begin{aligned}
& \bfX_{n+1}  = \bfX_n + \int_{E} \bfe \; {\mu}(d\bfe, \Delta t),\\
& Y_n  = \mathbb{E}_{t_n}^{\bfx}\big[Y_{n+1}\big] + (1-\theta)\Delta t_n \mathbb{E}_{t_n}^{\bfx}
\big[f(t_{n+1}, \bfX_{n+1}, Y_{n+1})\big]\\
&\hspace{2.6cm}+\theta\Delta t_n f(t_n, \bfX_n, Y_n),\\
\end{aligned}\right.
\end{equation}
where $0 \le \theta \le 1$. By choosing different values for $\theta$, 
we obtain different semi-discrete schemes, 
e.g., $\theta = 0$, $\theta=1$ and $\theta = \frac{1}{2}$ lead to forward Euler (FE), backward Euler (BE) 
and Crank-Nicolson (CN) schemes, respectively.

{Note that since $\bfX_t$ is the standard compound Poisson process, the probability distributions of $\bfX_t$ or any incremental $\bfX_t - \bfX_{t'}$, with $0 \le t' < t \le T$, are well known. Thus, one does not need to
discretize $\bfX_{t}$ so that, in \eqref{eq:semi}, $\bfX_{n+1}$ denotes the exact solution evaluated at $t_{n+1}$. In this case, $R_n$ in \eqref{eq:Ry} is the local truncation error of the scheme \eqref{eq:semi}; a rigorous error analysis of $R_n$ is given in \S\ref{sec:err}. Other time-stepping schemes, e.g., linear multi-step schemes \cite{Zhao:2010ik}, can be directly used in order to further improve the accuracy of time discretization. 
Note that in the general case where the coefficients $\bm{b}$ and $\bm{\sigma}$ in \eqref{eq:BSDE} and \eqref{eq:L1} are functions of
$t$ and $\bfX_t$, another time-stepping scheme \cite{Platen:2010eo} is also needed for to discretize $\bfX_t$, 
so that an extra local truncation error will be introduced into the semi-discrete scheme \eqref{eq:semi}. This is beyond the scope of this effort but will be the focus of future work.}

\subsection{The fully-discrete scheme}\label{sec:fully}
The semi-discrete scheme \eqref{eq:semi} is not computationally feasible, because the involved conditional mathematical expectation $\mathbb{E}_{t_n}^{\bm x}[\cdot]$ is defined over the whole real space $\mathbb{R}^d$. Thus, an effective spatial discretization approach 
is also necessary in order to approximate $\mathbb{E}_{t_n}^{\bfx}[\,\cdot\,]$. 
To proceed, we partition the $d$-dimensional Euclidean space $\mathbb{R}^d$ by
\[
\mathcal{S}^d_h = \mathcal{S}^1_{h_1} \times \mathcal{S}^2_{h_2}  \times \cdots \times \mathcal{S}^d_{h_d} ,
\]
where $\mathcal{S}_{h_k}^k$ for $k = 1, \ldots, d$ is a partition of the one-dimensional real space $\mathbb{R}$, i.e.,
\[
\mathcal{S}_{h_k}^k= \Big\{ x_i^k \;\Big|\;  x_i^k \in \mathbb{R}, i \in \mathbb{Z}, x_i^k < x^k_{i+1}, 
\lim_{i\rightarrow +\infty} x_i^k = +\infty, \lim_{i\rightarrow -\infty} x_i^k = -\infty \Big\},
\]
such that $h_k = \max_{i \in \mathbb{Z}}\{|x_i^k - x_{i-1}^k|\}$ and $h = \max_{1\le k \le d} h_k$. For each multi-index
$\ii = (i_1, i_2, \ldots, i_d) \in \mathbb{Z}^d$, the corresponding grid point in $\mathcal{S}_h^d$ is denoted by $\bfx_{\ii} = (x^1_{i_1}, \ldots, x^d_{i_d})$.

Now we turn to constructing a quadrature rule for approximating $\mathbb{E}_{t_n}^{\bfx_{\ii}}[\,\cdot\,]$ in \eqref{eq:semi} for $(t_n, \bfx_{\ii}) \in \mathcal{T}_\tau \times \mathcal{S}_h^d$.
Such expectation is defined with respect to the probability distribution of the incremental stochastic process
$\Delta \bfX_{n+1} = \bfX_{n+1} - \bfX_{n} =  \bfX_{n+1}  - \bfx_{\ii}$, which is a compound Poisson process starting from the grid point $(t_n,\bfx_{\ii})$. 
Here we take $\mathbb{E}_{t_n}^{\bfx_{\ii}}[Y_{n+1}]$ as an example and the proposed quadrature rule can be directly extended to approximate 
$\mathbb{E}_{t_n}^{\bfx_{\ii}}[f(t_{n+1},\bfX_{n+1}, Y_{n+1})]$. 
It is well known that the number of jumps of $\bfX_t$ within $(t_n, t_{n+1}]$, denoted by $N_t$, follows a Poisson distribution with intensity $0<\lambda < +\infty$, 
and the size of each Poisson jump follows the distribution $\rho(\bfe)d\bfe$ defined in \eqref{eq:coeff}. Thus, $\mathbb{E}_{t_n}^{\bfx_{\ii}}[Y_{n+1}]$ can be represented by
\begin{equation}\label{eq:EY}
\begin{aligned}
\mathbb{E}_{t_n}^{\bfx_{\ii}} \big[Y_{n+1}\big] 
& = \sum_{m=0}^\infty \mathbb{P}\Big\{N_{t_{n+1}} - N_{t_n} =m\Big\}\; \mathbb{E} \bigg[Y_{n+1}\bigg(\bfx_{\ii} + \sum_{k=1}^m \bfe_k\bigg)\bigg]\\
& = \sum_{m=0}^\infty\exp(-\lambda\Delta t_n) \; \frac{(\lambda \Delta t_n)^m }{m!}\;  \mathbb{E} \bigg[Y_{n+1}\bigg( \bfx_{\ii} + \sum_{k=1}^m \bfe_k\bigg)\bigg]\\
& = \exp(-\lambda\Delta t_n) \; Y_{n+1}(\bfx_{\ii}) +  \sum_{m=1}^\infty  \exp(-\lambda\Delta t_n) \; \frac{(\lambda \Delta t_n)^m }{m!}
\\
& \hspace{0.5cm}\times \int_{E} \cdots \int_{E} Y_{n+1}\bigg(\bfx_{\ii} + \sum_{k=1}^m \bfe_k\bigg)
\bigg(\prod_{k=1}^m \rho(\bfe_k) \bigg)d\bfe_1\cdots d\bfe_m,\\
\end{aligned}
\end{equation}   
where $\bfe_k = (e_k^1,\ldots, e_k^d)$ for $k = 1, \ldots, m$ is the size of the $k$-th jump. 
Observing that
the probability of having $m$ jumps within $(t_n, t_{n+1}]$ 
is of order $\mathcal{O}((\lambda \Delta t_n)^m)$, the conditional mathematical expectation $\mathbb{E}_{t_n}^{\bfx_i}[Y_{n+1}]$ 
can be approximated by a truncation of \eqref{eq:EY}, i.e., the sum of a {\em finite} sequence, where the 
number of terms retained is determined according to the prescribed accuracy. 
For example, if we take $\theta =  \frac{1}{2}$ in \eqref{eq:semi}, 
we expect a second-order convergence from the time discretization, 
i.e., the local truncation error $R_n$ in \eqref{s3:e3} should be of order $\mathcal{O}((\Delta t_n)^3)$.
In this case, assuming the intensity value $\lambda$ is of order $\mathcal{O}(1)$, the first three terms should be retained in
\eqref{eq:EY} in order to guarantee the error from the truncation of \eqref{eq:EY} matches the order of $R_n$. 
Hence, in the sequel, we denote by ${\mathbb{E}}_{t_n,M_y}^{\bfx_{\ii}}[Y_{n+1}]$ the approximation of $\mathbb{E}_{t_n}^{\bfx_{\ii}}[Y_{n+1}]$ by retaining the first $M_y+1$ terms, where $M_y$ indicates the number of jumps included in ${\mathbb{E}}_{t_n,M_y}^{\bfx_{\ii}}[Y_{n+1}]$. An analogous notation ${\mathbb{E}}_{t_n,M_f}^{\bfx_{\ii}}[f_{n+1}]$ is used to represent the approximation of ${\mathbb{E}}_{t_n}^{\bfx_{\ii}}[f_{n+1}]$ by retaining $M_f$ jumps.

Next, we also need to approximate the $m\times d$-dimensional integral in \eqref{eq:EY} for $m = 1, \ldots, M_y$. 
This can be accomplished by selecting an appropriate quadrature rule base on 
the properties of $\rho(\bfe)$ and the smoothness of $Y_{n+1}$. A straightforward choice is to utilize Monte Carlo methods by drawing samples from $\prod_{k=1}^m \rho(\bfe_k)$, but this is inefficient because of slow convergence. To enhance the convergence rate, an alternative way is to use the tensor product of high-order one-dimensional quadrature rules, e.g., Newton-Cotes, Gaussian, etc.. In addition, for multi-dimensional problems, i.e., $d > 1$, by requiring second-order convergence $\mathcal{O}((\Delta t)^2)$, i.e., $M_y = 2$, sparse-grid quadrature rules \cite{Bungartz:2004kx,Gunzburger:2014hi} can be applied to further improve the computational efficiency.
Without loss of generality, for $m = 1, \ldots, M_y$, we denote by $\{w_{q}^m\}_{q=1}^{Q_m}$ and $\{\bm{a}_q^{1,m}, \ldots, \bm a_q^{m,m}\}_{q=1}^{Q_m}$ the set of weights and points, respectively, of the selected quadrature rule for estimating the $m$-th integral in \eqref{eq:EY}. 
%
Then, the approximation of 
$\mathbb{E}_{t_n}^{\bfx_{\ii}}[Y_{n+1}]$, denoted by $\widehat{\mathbb{E}}_{t_n,M_y}^{\bfx_{\ii}}[Y_{n+1}]$ is given by
\begin{equation}\label{appro_EY}
\begin{aligned}
\widehat{\mathbb{E}}_{t_n,M_y}^{\bfx_{\ii}}\big[Y_{n+1}\big] & = \exp(-\lambda\Delta t) \; Y_{n+1}(\bfx_{\ii}) \\
& \quad + \sum_{m=1}^{M_y} \exp(-\lambda \Delta t) \frac{(\lambda \Delta t)^m}{m!} \; \sum_{q=1}^{Q_m} 
w^m_q \;  Y_{n+1}\Big( \bfx_{\ii} + \sum_{k=1}^m\bm{a}^{k,m}_{q}\Big).
\end{aligned}
\end{equation}

Note that it is possible that the quadrature points $\{\bfx_{\ii}+ \sum_{k=1}^m\bm{a}_{q}^{k,m}\}_{q = 1}^{Q_m}$ in \eqref{appro_EY}
do not belong to the spatial grid $\mathcal{S}_h^d$.  The simplest approach to manage this difficult is to add all quadrature points to the spatial grid 
at each time stage. However, this will result in an exponential growth of the total number of grid points as the number of time steps increases. 
Thus, we follow the same strategy as in \cite{Zhao:2006jx,Zhao:2012ku,Zhao:2010ik}, and construct piecewise Lagrange interpolating polynomials based on $\mathcal{S}_h$ to approximate $Y_{n+1}$ at the non-grid points. Specifically, at any given point $\bfx = (x^1, \ldots, x^d) 
\in \mathbb{R}^d$, $Y_{n+1}(\bfx)$ is approximated by
\[
Y_{n+1}(\bfx) \approx {Y}_{n+1,p}(\bfx) = \sum_{j_1=1}^{p+1} \cdots \sum_{j_d=1}^{p+1} \Bigg({Y}_{n+1,p}^{i_{j_1}, \ldots, i_{j_d}}
\; \prod_{k = 1}^d \prod_{1\le j \le p+1\atop j\neq j_k} \frac{x^k-x_{i_{j}}^k}{x^k_{i_{j_k}}-x^k_{i_{j}}}\Bigg), 
\]
where ${Y}_{n+1,p}(\bfx)$ is a $p$-th order tensor-product Lagrange interpolating polynomial. 
For $k = 1,\ldots, d$, the interpolation points $\{x^k_{i_{j}}\}_{j=1}^{p+1} \subset \mathcal{S}_h^k$ are the closest $p+1$ neighboring points of $x^k$ such that $(x^1_{i_{j_1}}, \ldots, x^d_{i_{j_d}})$, for $j_k = 1, \ldots, p+1$ and $k = 1, \ldots, d$, constitute a local tensor-product 
sub-grid around $\bfx$. 
The fully-discrete solution at the interpolation point 
$(x^1_{i_{j_1}}, \ldots, x^d_{i_{j_d}}) \in \mathcal{S}_h^d$ is given by 
${Y}_{n+1,p}^{{i_{j_1}, \ldots, i_{j_d}}}$.
Note that ${Y}_{n+1,p}(\bfx)$ does not interpolate the semi-discrete solution $Y_{n+1}(\bfx)$ 
due to the error $Y_{n+1}(\bfx_{\ii}) - {Y}_{n+1,p}^{\ii}$. 
 Therefore, the fully-discrete scheme of the BSDE in \eqref{eq:BSDE2} is described as follows: given 
the random variable $Y_{N}(\bfx_{\ii})$ for $\ii \in \mathbb{Z}^d$, and for $n=N-1,\ldots, 0$, solve the quantities ${Y}_{n,p}^{\ii}$ for $\ii \in \mathbb{Z}^d$ such that
\begin{equation}\label{eq:full}
\left\{
\begin{aligned}
& \bfX_{n+1}  = \bfx_{\ii} + \int_{E} \bfe \; {\mu}(d\bfe, \Delta t),\\
& {Y}_{n,p}^{\ii}= \widehat{\mathbb{E}}_{t_n,M_y}^{\bfx_{\ii}}\left[{Y}_{n+1,p}\right] +
 (1-\theta)\Delta t_n \widehat{\mathbb{E}}_{t_n,M_f}^{\bfx_{\ii}}\left[f(t_{n+1}, \bfX_{n+1}, {Y}_{n+1,p})\right]\\
&  \hspace{3.5cm}+\theta\Delta t_n f\left(t_n, \bfX_{n}, {Y}_{n,p}^{\ii}\right),\\
\end{aligned}\right.
\end{equation}
where $\theta \in [0,1]$ and the non-negative integers $M_y$, $M_f$ indicate the Poisson jumps included in the approximations of $\mathbb{E}_{t_n}^{\bfx_\ii}[Y_{n+1,p}]$ and $\mathbb{E}_{t_n}^{\bfx_\ii}[f]$, respectively. 

Note that, by using the quadrature rule, the approximation in \eqref{appro_EY} is defined in a bounded domain for a fixed $\bm x_\ii \in \mathcal{S}^d_h$. As such, the approximate solution $Y_{0,p}$ in any bounded subdomain of $\mathbb{R}^d$ can be obtained by solving the fully-discrete scheme \eqref{eq:full} in a bounded subset of $\mathcal{S}_h^d$. 
We also observe from \eqref{eq:full} that at each point $(t_n, x_{\ii})\in \mathcal{T}_\tau \times \mathcal{S}_h^d$, the computation of $Y_{n,p}^{\ii}$ only depends on $\bfX_{n+1}$ and $Y_{n+1,p}$ even though an implicit time-stepping scheme ($\theta > 0$) is used. This means 
$\{Y_{n,p}^{\ii}\}_{\ii \in \mathbb{Z}}$ on each time stage can be computed independently, so that
the difficulty of solving linear systems with possibly dense matrices is completely avoided. Instead, $Y_{n,p}^{\ii}$ 
can be either computed explicitly (in  case $f$ is linearly dependent on $Y_{n,p}^{\ii}$), 
or obtained by solving a nonlinear equation (in  case $f$ is nonlinear with respect to $Y_{n,p}^{\ii}$). 
This feature makes it straightforward to develop massively parallel algorithms, 
which are, in terms of scalability, very similar to the CTRW method.
Moreover, the scheme \eqref{eq:full} is expected to outperform the CRTW method because it achieves high-order accuracy 
(discussed in \S\ref{sec:err}) from the discretization and is also capable of solving problems that include a general nonlinear forcing term $g$. 
In fact, the combination of accurate approximations and scalable computations are the key advantages of our 
approach, compared to the existing methods for the nonlocal diffusion problem in \eqref{nloc_diff}. 

\begin{rem}
The nonlinear Feynman-Kac
formula converted the effect of the nonlocal operator $\mathcal{L}$ in \eqref{eq:L} to the behavior of the corresponding 
compound Poisson process, which is accurately approximated by the conditional mathematical expectations 
$\mathbb{E}_{t_n}^{\bfx}[\,\cdot\,]$.
As such, there is no need to discretize the nonlocal operator in the fully-discrete scheme \eqref{eq:full}, so that stability does not require any CFL-type restriction on the time step; this is in contrast to classic numerical schemes for which explicit schemes do require a CFL condition.  Moreover, according to the 
analysis in \cite{Zhao:2006jx,Zhao:2010ik}, the approximation \eqref{eq:full} is stable as long as the semi-discrete scheme is absolutely stable.
\end{rem}

\begin{rem}
The total computational cost of the scheme \eqref{eq:full} is dominated by the cost of approximating
$\mathbb{E}_{t_n}^{\bfx_\ii}[\,\cdot\,]$ at each grid point $\bfx_{\ii} \in \mathcal{S}_h^d$, using the formula in \eqref{appro_EY}.
For example, when solving a three-dimensional problem ($d=3$) and retaining two L\`{e}vy jumps ($M_y = M_f =2$,) our approach requires the approximation of multiple  
six-dimensional integrals. In this case, sparse-grid quadrature rules \cite{Zhang:2013en} can be used to alleviate the explosion in computational cost coming from the high-dimensional systems.
\end{rem}

%

%
%
\section{Error estimates}\label{sec:err}
In this section, we analyze both the semi-discrete and fully-discrete schemes, given by 
\eqref{eq:semi} and \eqref{eq:full} respectively, 
for the BSDE in \eqref{eq:BSDE2}. Without
loss of generality, we only consider the one-dimensional case ($d = 1$) and set $\theta = \frac{1}{2}$;
the following analysis can be directly extended to multi-dimensional cases. For simplicity, we also assume
$\mathcal{T}_\tau$ and $\mathcal{S}_h^d$ are both uniform grids, i.e., $\Delta t = \Delta t_n$ 
for $n = 1, \ldots, N$ and $\Delta x = x_i - x_{i-1}$ for $i \in \mathbb{Z}$.

Note that the well-posedness of the BSDE in \eqref{eq:BSDE2} only requires global Lipchitz continuity on $f$ with respect to $X_t$ and $Y_t$. However, 
in order to obtain error estimates, we need to impose stronger regularity on $f$. To this end, we first introduce the following notation:
\[
\begin{aligned}
& \mathcal{C}^{k}_b \left( D_1 \times \cdots \times D_J \right)\\
 = & \bigg\{\psi: \prod_{j=1}^J D_j\rightarrow \mathbb{R} \;\bigg|\;  \dfrac{\partial^{|\bm{\alpha}|}\psi}{\partial \bfx^{\bm{\alpha}}} \text{  is bounded and continuous} \mbox{ for } 0\le |\bm{\alpha}|\le k
\\
&  \quad\quad \mbox{ where } \bm{\alpha} = (\alpha_1, \dots,\alpha_J) \mbox{ with } \{\alpha_j\}_{j=1}^J \subset \mathbb{N}
\mbox{ and } |\bm{\alpha}| = \alpha_1 + \cdots + \alpha_J \bigg\},
\end{aligned}
\]
where $D_1 \times \cdots \times D_J \subset \mathbb{R}^J$.
Next we present the following lemma that relates the smoothness of $f(t,x,y)$ to 
the regularity of the solution $u(t,x)$ of the nonlocal diffusion equation in \eqref{nloc_diff}.
\begin{lem}\label{lem1}
Under assumptions in \eqref{gamma} and \eqref{lambda}, if $\varphi(x)\in\mathcal C_b^k(\mathbb{R})$,
$f(t,x,y) \in \mathcal C_b^k([0,T] \times \mathbb{R}^2)$ in \eqref{eq:BSDE2} and $\frac{\partial^k f}{\partial  x^{\alpha_1}\partial y^{\alpha_2}}$ with $\alpha_1+\alpha_2\le  k$ is uniformly Lipschitz 
continuous with respect to $x$ and $y$, then the nonlocal diffusion equation  
in \eqref{nloc_diff} with $d=1$, $u_0(x) = \varphi'(x)$ and $g(t, x, u) = f(T-t,x, u)$ has a unique viscosity solution 
$u(t,x) \in\mathcal{C}_b^{k}([0,T]\times \mathbb{R})$.
\end{lem}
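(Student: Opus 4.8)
The plan is to exploit the fact that, because $\gamma$ is integrable by \eqref{gamma}, the nonlocal operator $\mathcal{L}$ in \eqref{eq:L} is a \emph{bounded} linear operator on each space $\mathcal{C}_b^j(\mathbb{R})$: writing $\lambda = \int_{\mathbb{R}} \gamma(\bfe)\,d\bfe < \infty$ as in \eqref{eq:coeff}, one has $\mathcal{L}u = (\gamma \star u) - \lambda u$, where $(\gamma\star u)(x) = \int_{\mathbb{R}} u(x+e)\gamma(e)\,de$. Consequently $\mathcal{L}$ is the infinitesimal generator of the uniformly continuous transition semigroup $P_t = e^{t\mathcal{L}}$ of the compound Poisson process $\bfX_t$ of \eqref{eq:BSDE2}, and $(P_t\phi)(x) = \mathbb{E}[\phi(x + \bfX_t)]$ is a convolution against the (probability) law of the increment. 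Two structural facts drive the whole argument and I would record them first: $P_t$ is a contraction on $\mathcal{C}_b^j(\mathbb{R})$ for every $j$ (convolution with a probability measure does not increase the sup-norm of any derivative), and $P_t$ commutes with differentiation, i.e. $\partial_x^j P_t\phi = P_t\,\partial_x^j\phi$.

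Next I would recast \eqref{nloc_diff} in its mild (Duhamel) form
\begin{equation*}
u(t,\cdot) = P_t\, u_0 + \int_0^t P_{t-s}\, g\big(s,\cdot,u(s,\cdot)\big)\,ds,
\end{equation*}
and note that the viscosity solution furnished by the Feynman--Kac representation recalled before the lemma satisfies this identity; boundedness and uniform continuity of $u$ (the case $k=0$) are exactly the properties already quoted there. The core is then an induction on $k$. For the spatial derivatives I would differentiate the mild identity $k$ times in $x$, move the derivatives inside $P_{t-s}$ by the commutation property, and expand $\partial_x^k\big[g(s,\cdot,u)\big]$ by the Fa\`a di Bruno formula: the top-order term is $g_y(s,\cdot,u)\,\partial_x^k u$, while the remaining terms involve only derivatives of $u$ of order $\le k-1$ (controlled by the inductive hypothesis) multiplied by spatial/mixed derivatives of $g$ up to order $k$ (bounded since $f\in\mathcal{C}_b^k$). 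Taking sup-norms and using the contraction property of $P_{t-s}$ yields a closed integral inequality for $\sum_{j\le k}\|\partial_x^j u(t,\cdot)\|_\infty$, to which Gronwall's lemma gives a uniform-in-$t$ bound; together with the $x$-continuity inherited from $P_t$ this shows $u(t,\cdot)\in\mathcal{C}_b^k(\mathbb{R})$ uniformly in $t$.

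It remains to supply the time and mixed regularity and to conclude. Since $\partial_t u = \mathcal{L}u + g(t,\cdot,u)$ pointwise, and $\mathcal{L}$ maps $\mathcal{C}_b^{j}(\mathbb{R})$ into itself, the time derivatives of $u$ can be expressed through its spatial derivatives and derivatives of $g$; differentiating this relation and inducting on the total order $|\bm\alpha|\le k$ produces bounded continuous mixed derivatives, giving $u\in\mathcal{C}_b^k([0,T]\times\mathbb{R})$ in the sense of the definition preceding the lemma. The classical-in-$x$ mild solution so obtained is in particular a viscosity solution, so by the uniqueness stated before the lemma it coincides with $u$, completing the proof.

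I expect the main obstacle to be the closing of the $\mathcal{C}_b^k$ estimate, i.e. controlling the Nemytskii (composition) operator $v\mapsto g(s,\cdot,v(\cdot))$ in the $\mathcal{C}_b^k$ norm. The delicate point is that in the Fa\`a di Bruno expansion the coefficients $\partial^{\bm\alpha} g$ are evaluated \emph{along} $u$, so closing the Gronwall argument — and, if one prefers a fixed-point construction over a bootstrap, establishing the local Lipschitz continuity of this operator — requires estimating differences $\partial^{\bm\alpha} g(s,\cdot,u_1)-\partial^{\bm\alpha}g(s,\cdot,u_2)$ for $|\bm\alpha|=k$. This is exactly where the hypothesis that the top-order derivatives $\partial^k f/\partial x^{\alpha_1}\partial y^{\alpha_2}$ be uniformly Lipschitz in $(x,y)$ is indispensable; the weaker global Lipschitz continuity of $f$ alone, which sufficed for the well-posedness of \eqref{eq:BSDE2}, does not control the top derivative and hence cannot yield $\mathcal{C}_b^k$ regularity.
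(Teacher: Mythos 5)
Your proposal is correct, but it proves the lemma by a genuinely different route than the paper. The paper works on the probabilistic side: it differentiates the BSDE \eqref{eq:BSDE2} with respect to the initial datum, obtaining a linear BSDE with jumps \eqref{eq:V_BSDEaa} for the variation processes $(\nabla X_t,\nabla Y_t,\nabla U_t)$; the regularity of $f$ gives a bounded, continuous $\nabla Y_t$; the nonlinear Feynman--Kac formula identifies $\nabla Y_t = w(t,X_t)$ with $w$ the viscosity solution of the linear PIDE \eqref{V_nloc_diff}; and comparing that PIDE with the $x$-derivative of \eqref{nloc_diff} yields $w = \partial u/\partial x$. The case $k>1$ is then handled by recursion, and the time regularity is read off the equation, just as you do. You instead work entirely on the PDE side, exploiting a structural fact the paper's proof never uses: integrability of $\gamma$ makes $\mathcal{L} = \gamma\star(\cdot) - \lambda\,\mathrm{Id}$ a bounded operator on each $\mathcal{C}_b^j(\mathbb{R})$, so the transition semigroup is uniformly continuous, and Duhamel plus Fa\`a di Bruno plus Gronwall closes a $\mathcal{C}_b^k$ estimate. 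Your route is more elementary and self-contained: it avoids invoking differentiability of BSDE solutions with respect to initial data, which the paper asserts (via well-posedness and regularity of the linear jump BSDE \eqref{eq:V_BSDEaa}) rather than proves. The paper's route, in exchange, does not depend on boundedness of $\mathcal{L}$, so it is the one that survives non-integrable kernels or an added Brownian component, and it produces a probabilistic representation of $\partial u/\partial x$ consistent with the rest of the paper's framework. One point you should tighten: differentiating the mild identity $k$ times presupposes exactly the regularity being proved, so as written your Gronwall step is only an a priori estimate; to make it a proof you must run it as a fixed point for the coupled system $(u,\partial_x u,\dots,\partial_x^k u)$, or on Picard iterates combined with uniform convergence of derivatives, which is precisely where the assumed Lipschitz continuity of the top-order derivatives of $f$ enters. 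Since you flag this yourself and identify the correct fix, it is a presentational gap rather than a substantive one.
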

\begin{proof} 
Here we only prove the case of $k=1$, as the cases $k>1$ can be proved by recursively using the following argument. 
Based on the regularity of $f$, it is easy to see that $\frac{\partial u}{\partial t}$ is bounded and continuous with respect to $t$, so that we focus on proving the regularity of $\frac{\partial u}{\partial x}$. 
Let $\nabla X_t$, $\nabla Y_t$ and $\nabla U_t$ be the variations of 
the stochastic processes $X_t$, $Y_t$ and $U_t$ with respect to $X_t$
in \eqref{eq:BSDE2}, respectively.  
Then the triple $(\nabla X_t, \nabla Y_t,\nabla U_t)$ is the solution of the following BSDE.
\begin{equation}\label{eq:V_BSDEaa}
\left\{
\begin{aligned}
\nabla {X}_t= & 1,\\
\nabla Y_t=& \varphi'({X}_T)+ \int_t^T f_x'(s, {X}_s, Y_s) 
+f_y'(s, {X}_s, Y_s) \nabla Y_s ds\\
& -  \int_t^T \int_{E} \nabla U_s({e})\; \tilde{\mu}(d{e},ds),
\end{aligned}\right.
\end{equation}
which is a linear equation with respect to 
$\nabla X_s$, $\nabla Y_s$ and $\nabla U_s$. Due to the regularity of $f$, the BSDE in \eqref{eq:V_BSDEaa}
has a unique  solution $(\nabla Y_t, \nabla U_t)$ where $\nabla Y_t$ is bounded and continuous. By the relationship between BSDEs and PIDEs, we see that $\nabla Y_t = w(t,X_t)$ where $w(t,x)$ is the unique viscosity solution of the following PIDE \cite{Anonymous:fk}
\begin{equation}\label{V_nloc_diff}
\left\{
\begin{aligned}
& \frac{\partial w}{\partial t}(t,x) + \mathcal{L} w(t,x) 
+ f_x'(t,x,v)+f_v'(t,x,v) w=0 & (t,x) \in [0,T] \times \mathbb{R},\\
& w(T,x) = \varphi'(x), \text{ for } x \in \mathbb{R},
\end{aligned}\right.
\end{equation}
where $v = u(T-t,x)$ is the transformed solution of the nonlocal problem in \eqref{nloc_diff} with $d=1$, $u_0 = \varphi'(x)$ and $g(t, x, u) = f(T-t,x, u)$. Then, by differentiating the equations in \eqref{nloc_diff} with respect to $x$ and comparing the resulting equations with \eqref{V_nloc_diff}, we can see that $w(t,x) = \frac{\partial u }{\partial x} (T-t,x)$. 
Due to the continuity and boundedness of $w(t,x)$, we have that $u(t,x) \in \mathcal{C}_1^b([0,T]\times \mathbb{R})$,
which completes the proof.
\end{proof}

Next we provide the estimate of the local truncation error $R_n$ in \eqref{eq:Ry}.

\begin{thm}\label{err_lem3} Using Lemma \ref{lem1} with $k=2$,
 the local truncation error $R_n$ in \eqref{s3:e3}, for the semi-discrete scheme \eqref{eq:semi} with $\theta = \frac{1}{2}$ can be bounded by
\begin{equation}
 \mathbb{E}[|R_n|]\leq C(\Delta t)^3 \; \text{ for }\;  n = 0, \ldots, N-1,
\end{equation}
where the constant $C$ depends only on the terminal time $T$, the jump intensity $\lambda$, the upper bounds
of $f$, $\varphi$, and their derivatives. 
\end{thm}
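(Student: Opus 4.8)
The plan is to recognize $R_n$ as the quadrature error of the trapezoidal rule applied to a smooth deterministic integrand, and then to control that integrand's second derivative using the regularity supplied by Lemma \ref{lem1} with $k=2$. Fix the conditioning point $X_{t_n}=x$ and set $\phi(s)=\mathbb{E}_{t_n}^{x}[f(s,X_s,Y_s)]$ for $s\in[t_n,t_{n+1}]$, a deterministic function of $s$. Since $X_{t_n}=x$ and $Y_{t_n}=u(T-t_n,x)$ are fixed, $f(t_n,X_{t_n},Y_{t_n})=\phi(t_n)$ and $\mathbb{E}_{t_n}^{x}[f(t_{n+1},X_{t_{n+1}},Y_{t_{n+1}})]=\phi(t_{n+1})$. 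Substituting $\theta=\tfrac12$ into \eqref{eq:Ry} gives
\[
R_n=\int_{t_n}^{t_{n+1}}\Big[\phi(s)-\tfrac12\phi(t_n)-\tfrac12\phi(t_{n+1})\Big]\,ds
=\int_{t_n}^{t_{n+1}}\phi(s)\,ds-\frac{\Delta t}{2}\big[\phi(t_n)+\phi(t_{n+1})\big],
\]
which is exactly the error of the trapezoidal rule on $[t_n,t_{n+1}]$. If $\phi\in\mathcal{C}^2$ with $\sup_s|\phi''(s)|\le C_0$, the standard remainder bound yields $|R_n|\le\tfrac{1}{12}(\Delta t)^3C_0$ uniformly in $x$, and since this holds almost surely for $R_n=R_n(X_{t_n})$, taking $\mathbb{E}[\,\cdot\,]$ over $X_{t_n}$ gives $\mathbb{E}[|R_n|]\le C(\Delta t)^3$.

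Thus everything reduces to showing that $\phi\in\mathcal{C}^2([t_n,t_{n+1}])$ with a second derivative bounded independently of $n$ and $x$. I would exploit two facts. First, because $Y_s=u(T-s,X_s)$, one may write $\phi(s)=\mathbb{E}_{t_n}^{x}[F(s,X_s)]$ with $F(s,x):=f(s,x,u(T-s,x))$; Lemma \ref{lem1} with $k=2$ gives $u\in\mathcal{C}_b^2$, so that together with $f\in\mathcal{C}_b^2$ the map $F$ and its time-derivatives $\partial_s F$, $\partial_s^2F$ are bounded and continuous (explicitly, $\partial_s F=f_s-f_y\,u_t$ and $\partial_s^2 F=f_{ss}-2f_{sy}u_t+f_{yy}u_t^2+f_y\,u_{tt}$, all bounded once $u_t,u_{tt}$ are). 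Second, because $X_s$ is a pure compound Poisson process whose generator is precisely the integrable-kernel operator $\mathcal{L}$ of \eqref{eq:L}, applying It\^o's formula for jump processes and taking conditional expectations yields
\[
\phi(s)=F(t_n,x)+\int_{t_n}^{s}\mathbb{E}_{t_n}^{x}\big[(\partial_r+\mathcal{L})F(r,X_r)\big]\,dr,
\]
whence $\phi'(s)=\mathbb{E}_{t_n}^{x}[(\partial_s+\mathcal{L})F(s,X_s)]$ and, iterating, $\phi''(s)=\mathbb{E}_{t_n}^{x}[(\partial_s+\mathcal{L})^2F(s,X_s)]$.

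The crucial observation making the bound uniform is that, by the integrability assumption \eqref{gamma} together with $\lambda=\int_E\gamma(\bfe)\,d\bfe$ from \eqref{eq:coeff}, the operator $\mathcal{L}$ is \emph{bounded} on $L^\infty$, with $\|\mathcal{L}\psi\|_\infty\le 2\lambda\|\psi\|_\infty$. Consequently $(\partial_s+\mathcal{L})^2F=\partial_s^2F+2\mathcal{L}\partial_sF+\mathcal{L}^2F$ is a finite combination of bounded functions, each controlled by the sup-norms of $f$, $\varphi$ and their derivatives and by powers of $\lambda$ and $T$ (the latter entering through the bounds on $u$ from Lemma \ref{lem1}). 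This furnishes $\sup_s|\phi''(s)|\le C_0$ and closes the argument. Notice that for integrable kernels no spatial derivatives of $F$ are needed in the estimate, since $\mathcal{L}$ is an integral operator rather than a differential one; only temporal regularity of $u$ is essential.

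The main obstacle is the rigorous justification of the second step: establishing the It\^o/Dynkin identity and the differentiation-under-the-expectation formulas for $\phi'$ and $\phi''$, including the interchange of $\partial_s$ with $\mathcal{L}$ and with $\mathbb{E}_{t_n}^{x}[\,\cdot\,]$. This requires verifying integrability of the jump integrands—immediate once $F,\partial_sF$ are bounded and $\gamma$ is integrable—and the $\mathcal{C}^2$-in-time regularity of $F$, which is exactly where Lemma \ref{lem1} with $k=2$ (boundedness of $u_t$ and $u_{tt}$) is indispensable. An equivalent but more elementary route avoids It\^o's formula altogether by differentiating the explicit compound-Poisson series \eqref{eq:EY} for $\phi(s)$ term by term in $s$; there the delicate point is the uniform convergence of the differentiated series, which is again secured by the integrability of $\gamma$ and the boundedness of $F$.
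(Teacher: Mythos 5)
Your proposal is correct and takes essentially the same approach as the paper: the paper likewise sets $F(t,X_t)=f\big(t,X_t,u(T-t,X_t)\big)$, invokes Lemma \ref{lem1} with $k=2$ for $F\in\mathcal{C}_b^2$, and applies the It\^{o}/Dynkin formula for the compound Poisson process twice, its operator $L^0$ being exactly your $\partial_s+\mathcal{L}$ once the drift $b$ cancels the compensator term. The only difference is organizational: where you invoke the classical trapezoid-rule remainder $|R_n|\le\tfrac{1}{12}(\Delta t)^3\sup_s|\phi''(s)|$ with $\phi''=\mathbb{E}_{t_n}^{x}\big[(\partial_s+\mathcal{L})^2F\big]$, the paper carries out the equivalent cancellation explicitly, expanding both $\mathbb{E}_{t_n}^{x}[F(s,X_s)]$ and $\mathbb{E}_{t_n}^{x}[F(t_{n+1},X_{t_{n+1}})]$ to second order and bounding the resulting triple integrals of $L^0L^0F$ by Cauchy--Schwarz.
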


\begin{proof}
The formula \eqref{FK} shows that the solution $Y_t$ of the BSDE \eqref{eq:BSDE2} can be represented by
$Y_t =u(t,X_t)$. Under Lemma \ref{lem1}, if $f\in \mathcal{C}_b^{2}([0,T]\times \mathbb{R}^2)$ and $\varphi \in \mathcal{C}_b^{2}(\mathbb{R})$, then $u(t,X_t) \in \mathcal{C}_b^{2}([0,T] \times \mathbb{R})$. Thus, by defining $F(t,X_t) = f(t,X_t, u(T-t,X_t))$, we have $F\in \mathcal{C}_b^{2}([0,T]\times \mathbb{R})$. Before estimating the residual $R_n$ in \eqref{s3:e3},
we define the  differential operators $L^0$ and $L^1$ by
\begin{equation}
\left\{
\begin{aligned}
&L^0 F(t,X_t) = \frac{\partial F}{\partial t}(t,X_t) + b\frac{\partial F}{\partial x}(t,X_t) \\
& \hspace{2cm} + \int_{E} \bigg[ F(t,X_{t-}+e) - F(t,X_{t-}) - \frac{\partial F}{\partial x}(t,X_{t-}) e \bigg] \lambda(de), \\
&L^1 F(t,X_{t}) = F(t,X_{t-}+e) - F(t,X_{t-}).
\end{aligned}\right.
\end{equation}
Based on the definition of $X_s$ in \eqref{eq:BSDE2}, 
the integral form of the It\^{o} formula of $F(s, X_s)$ for $t_n \le s\le t_{n+1}$, under the condition $X_{t_n} = x$, is given by
\begin{equation} 
\begin{aligned}
F(s,X_s)
&=F(t_n,x)+\int_{t_n}^s L^0F(r,X_r)dr
+\int_{t_n}^s \int_{E} L^{1}F(r,X_{r-})\tilde{\mu}(de,dr).
\end{aligned}
\end{equation}
Thus, substituting the above formula into $\int_{t_n}^{t_{n+1}} \mathbb{E}_{t_n}^x [F(s, X_s)] ds$, we obtain

\begin{equation}
\begin{aligned}
&\int_{t_n}^{t_{n+1}} \mathbb{E}_{t_n}^x [F(s, X_s)] ds\\
= &\int_{t_n}^{t_{n+1}}
\mathbb{E}_{t_n}^x \left[F(t_n,x)  + \int_{t_n}^s L^0 F(r,X_r)  dr + \int_{t_n}^s L^1 F(r,X_{r-}) \tilde{\mu}(de,dr) \right] ds\\
= & F(t_n,x) \int_{t_n}^{t_{n+1}} ds + \int_{t_n}^{t_{n+1}}\int_{t_n}^s \mathbb{E}_{t_n}^x [L^0 F(r,X_r)] dr ds\\
= & F(t_n,x)\Delta t +  \int_{t_n}^{t_{n+1}}\int_{t_n}^s 
\mathbb{E}_{t_n}^x\left[ L^0 F(t_n,x) +   \int_{t_n}^r L^0L^0 F(z,X_z)  dz\right] dr ds\\
= & F(t_n,x)\Delta t + \frac{1}{2} (\Delta t)^2 L^0 F(t_n,x) +  \int_{t_n}^{t_{n+1}}\int_{t_n}^s \int_{t_n}^r 
\mathbb{E}_{t_n}^x [L^0L^0 F(z,X_z)]  dzdrds.
\end{aligned}
\end{equation}
Similar derivation can be conducted to $F(t_{n+1}, X_{t_{n+1}})$ to obtain
\begin{equation}
\begin{aligned}
 &\int_{t_n}^{t_{n+1}} \mathbb{E}_{t_n}^x [F(t_{n+1}, X_{t_{n+1}})] ds\\
= & F(t_n,x)\Delta t +  (\Delta t)^2 L^0 F(t_n,x) +  \int_{t_n}^{t_{n+1}}\int_{t_n}^{t_{n+1}} \int_{t_n}^r 
\mathbb{E}_{t_n}^x [L^0L^0 F(z,X_z)]  dzdrds.
\end{aligned}
\end{equation}
Therefore, the residual $R_n$ in \eqref{s3:e3} with $\theta = \frac{1}{2}$ can be represented by
\begin{equation}
\begin{aligned}
R_n & = \int_{t_n}^{t_{n+1}}\mathbb{E}_{t_n}^x[ F(s,X_s)]-\frac{1}{2}
F(t_n, x)-\frac{1}{2}\mathbb{E}_{t_n}^x[F(t_{n+1},X_{t_{n+1}})]ds \\
 & = \int_{t_n}^{t_{n+1}}\int_{t_n}^s \int_{t_n}^r \mathbb{E}_{t_n}^x [L^0L^0 F(z,X_z)]  dzdrds\\
 & \hspace{1.0cm}- \frac{1}{2} \int_{t_n}^{t_{n+1}}\int_{t_n}^{t_{n+1}} \int_{t_n}^r \mathbb{E}_{t_n}^x [L^0L^0 F(z,X_z)]  dzdrds.
\end{aligned}
\end{equation}
Then, taking the mathematical expectation of $R_n$, and using Cauchy-Schwarz inequality, we have that
\begin{equation}
\label{eq:CS1}
\begin{aligned}
\mathbb{E}[|R_n|] & \le \mathbb{E}\left[\left|\int_{t_n}^{t_{n+1}}\int_{t_n}^{s}\int_{t_n}^{r}
\mathbb{E}_{t_n}^x[L^0L^0F(z,X_{z})]dzdrds\right|\right] \\
& \quad +\frac{1}{2}\mathbb{E}\left[\left|\int_{t_n}^{t_{n+1}}
\int_{t_n}^{t_{n+1}}\int_{t_n}^{r}
\mathbb{E}_{t_n}^x[L^0 L^0F(z,X_{z})]dzdrds\right|\right]\\
& \le (\Delta t)^{\frac{3}{2}} \sqrt{\int_{t_n}^{t_{n+1}}\int_{t_n}^{s}\int_{t_n}^{r}
\mathbb{E}[|L^0L^0F(z,X_{z})|^2]dzdrds}\\
& \quad + \frac{1}{2}(\Delta t)^{\frac{3}{2}} \sqrt{\int_{t_n}^{t_{n+1}}
\int_{t_n}^{t_{n+1}}\int_{t_n}^{r}
\mathbb{E}[|L^0 L^0F(z,X_{z})|^2]dzdrds}\\
&\le \frac{3}{2} \sup_{0\le t \le T} \sqrt{\mathbb{E}[|L^0 L^0F(t,X_{t})|^2]} (\Delta t)^3\\
& \le C (\Delta t)^3,
\end{aligned}
\end{equation}
where, by the definition of $L^0$, it is easy to see that the constant $C$ only depends on the terminal time $T$, the jump intensity $\lambda$ the upper bounds of 
$f$, $\varphi$ and their derivatives. 
\end{proof}

We now turn our attention to proving an error estimate for semi-discrete scheme \eqref{eq:semi}. 
\begin{thm}\label{TL}
Let $Y_{t_n}$ and $Y_n$, for $n=0,1,\cdots,N$, be the exact solution of the BSDE \eqref{eq:BSDE2}
and the semi-discrete solution obtained by the scheme \eqref{eq:semi}, respectively.
Then, under the same conditions as that for Theorem \ref{err_lem3}, for sufficiently small time step $\Delta t$, the global truncation
error $e_n = Y_{t_n} - Y_n$, for $n=N-1,\dots,0$, can be bounded by
\begin{equation}\label{TL:e1}
\mathbb{E}[|Y_{t_n} - Y_n|] \le C (\Delta t)^2,
\end{equation}
where the constant $C$ is the same as in Theorem \ref{err_lem3}.
\end{thm}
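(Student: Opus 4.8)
The plan is to convert the global error statement into a recursion in $n$ that I can unwind to the terminal time $n=N$. First I would subtract the semi-discrete scheme \eqref{eq:semi} from the exact relation \eqref{s3:e3}, so that the residual $R_n$ from Theorem \ref{err_lem3} appears on the right-hand side. Writing $e_n = Y_{t_n}-Y_n$, this subtraction should yield an identity of the schematic form
\begin{equation}
e_n = \mathbb{E}_{t_n}^{x}[e_{n+1}] + \tfrac{1}{2}\Delta t\, \mathbb{E}_{t_n}^{x}\big[f(t_{n+1},\bfX_{t_{n+1}},Y_{t_{n+1}}) - f(t_{n+1},\bfX_{t_{n+1}},Y_{n+1})\big] + \tfrac{1}{2}\Delta t\,\big[f(t_n,x,Y_{t_n}) - f(t_n,x,Y_n)\big] + R_n,
\end{equation}
using $\theta=\tfrac12$. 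The key structural point is that the only $x$- and $X$-dependence entering the $f$-differences is through the $Y$-argument, since both exact and numerical solutions sit at the same spatial points and share the same compound-Poisson law for $\bfX_t$; thus the global truncation error propagates without any additional spatial discretization error at the semi-discrete level.

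Next I would take absolute values and the total expectation, and invoke the global Lipschitz continuity of $f$ from Lemma \ref{lem1} (with $K$ the Lipschitz constant) to bound each $f$-difference by $K\,|e_{\bullet}|$. The implicit term $\tfrac12\Delta t\, K\,\mathbb{E}[|e_n|]$ contains $e_n$ itself, so I would move it to the left-hand side; for $\Delta t$ small enough that $\tfrac12 K\Delta t < 1$, the factor $(1-\tfrac12 K\Delta t)^{-1}$ is bounded by $1 + C\Delta t$. This produces the one-step inequality
\begin{equation}
\mathbb{E}[|e_n|] \le (1 + C\Delta t)\,\mathbb{E}[|e_{n+1}|] + (1+C\Delta t)\,\mathbb{E}[|R_n|],
\end{equation}
after using the tower property $\mathbb{E}\big[|\mathbb{E}_{t_n}^{x}[e_{n+1}]|\big] \le \mathbb{E}[|e_{n+1}|]$ on the first term. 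Substituting the local estimate $\mathbb{E}[|R_n|]\le C(\Delta t)^3$ from Theorem \ref{err_lem3} then gives a standard linear recursion.

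Finally I would unwind the recursion from $n$ up to $N$, where $e_N = Y_{t_N}-Y_N = \varphi(\bfX_T)-\varphi(\bfX_T)=0$ by the shared terminal condition. Summing the geometric accumulation yields
\begin{equation}
\mathbb{E}[|e_n|] \le \sum_{j=n}^{N-1} (1+C\Delta t)^{\,j-n+1}\, C(\Delta t)^3 \le (1+C\Delta t)^{N}\, N\, C(\Delta t)^3,
\end{equation}
and since $N\Delta t = T$ and $(1+C\Delta t)^N \le e^{CT}$ is uniformly bounded, the $N\,(\Delta t)^3 = T(\Delta t)^2$ scaling delivers the claimed $\mathbb{E}[|Y_{t_n}-Y_n|]\le C(\Delta t)^2$, with the constant inheriting the dependencies stated in Theorem \ref{err_lem3}. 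The main obstacle I anticipate is handling the implicit $\theta$-term cleanly: I must justify absorbing the $\tfrac12 K\Delta t\,\mathbb{E}[|e_n|]$ contribution and confirm that the required smallness of $\Delta t$ (namely $\tfrac12 K\Delta t<1$) is exactly the ``sufficiently small time step'' hypothesis, and I should be careful that the Lipschitz bound applied to $f(t_{n+1},\cdot,Y_{t_{n+1}})-f(t_{n+1},\cdot,Y_{n+1})$ controls $|Y_{t_{n+1}}-Y_{n+1}| = |e_{n+1}|$ rather than introducing a stray dependence on the difference $Y_{t_{n+1}}-Y_{n+1,p}$, which would only arise at the fully-discrete stage.
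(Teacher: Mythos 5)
Your proposal is correct and follows essentially the same argument as the paper's proof: subtract the scheme from the exact $\theta$-relation, use the Lipschitz continuity of $f$ together with the fact that $\bfX$ is not discretized, absorb the implicit term for $\tfrac{1}{2}L\Delta t<1$, and unwind the resulting recursion from $e_N=0$ using $N\Delta t=T$ and Theorem~\ref{err_lem3}. The only cosmetic difference is that the paper keeps conditional expectations pointwise before passing to $\mathbb{E}[\,\cdot\,]$, while you take total expectation immediately via the tower property; both are equivalent.
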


\begin{proof}
Given $0 \le n \le N-1$ and $x \in \mathbb{R}$, subtracting \eqref{eq:semi} from \eqref{s3:e3}, we have
\begin{equation}\label{en}
\begin{aligned}
e_n & = \mathbb{E}_{t_n}^x [e_{n+1}] + 
\frac{\Delta t}{2} \mathbb{E}_{t_n}^x [f(t_{n+1}, X_{t_{n+1}}, Y_{t_{n+1}}) - f(t_{n+1}, X_{n+1}, Y_{n+1})]\\
& \hspace{1.5cm} + \frac{\Delta t}{2} [f(t_{n}, X_{t_{n}}, Y_{t_{n}}) - f(t_{n}, X_{n}, Y_{n})] + R_n\\
\end{aligned}
\end{equation}
Due to the Lipschitz continuity of $f$ and the fact that $X_{t_{n+1}} = X_{n+1}$, $X_{t_{n}} = X_{n} = x$, we get the  bound
\begin{equation}
|e_n| \le \mathbb{E}_{t_n}^x [|e_{n+1}|] + \frac{L\Delta t}{2} \mathbb{E}_{t_n}^x[|e_{n+1}|] + \frac{L\Delta t}{2} |e_n| + |R_n|,
\end{equation}
where $L$ is the Lipschitz constant of $f$ and the time step size $\Delta t$ satisfies $1 - \frac{L\Delta t}{2} > 0$. 
Substituting the upper bound of $R_n$, 
the above inequality can be rewritten as
\begin{equation}
|e_n| \le \frac{1+\frac{L\Delta t}{2}}{1- \frac{L\Delta t}{2}} \mathbb{E}_{t_n}^x [|e_{n+1}|] + \frac{C(\Delta t)^3}{1-\frac{L\Delta t}{2}}.
\end{equation}
Taking mathematical expectation on both sides, we obtain, by induction, an upper bound of $\mathbb{E}[|e_n|]$, i.e.,
\begin{equation}
\begin{aligned}
\mathbb{E}[|e_n|] & \le (1+L\Delta t)^{N-n} \mathbb{E}[|e_N|] + C(\Delta t)^3 \frac{(1+L\Delta t)^{N-n}-1}{L\Delta t}\\
& \le \frac{C(\Delta t)^2}{L} (e^{LT} - 1),
\end{aligned}
\end{equation}
where the constant $C$ is the same constant from \eqref{eq:CS1}.
\end{proof}

Next, we focus on analyzing the numerical error for the fully-discrete scheme \eqref{eq:full}. For $0\le n \le N-1$,
$Y_{n,p}$ is constructed using piecewise $p$-th order Lagrange polynomial interpolation on the mesh $\mathcal{S}_h$.
As for the quadrature rules involved in $\widehat{\mathbb{E}}_{t_n,M_y}^{x_i}[\,\cdot\,]$, without loss of generality, 
we consider a special case stated in the following assumption. 
\begin{assum}\label{assump1}
For $d = 1$ and $m = 1, \ldots, M_y$, the quadrature rules $\{w_q^m\}_{q=1}^{Q_m}$, $\{a_q^{1,m}, \ldots,a_q^{m,m} \}_{q=1}^{Q_m}$ 
used in \eqref{appro_EY} are assumed to be the tensor-product of the selected one-dimensional 
quadrature rule, denoted by $\{\widetilde{w}_j, \widetilde{a}_j\}_{j=1}^Q$. The error in the 
quadrature rule is represented by
\begin{equation}
\begin{aligned}
& \Bigg|\int_{E} \cdots \int_{E} Y_{t_{n+1}}\bigg(x_{i} + \sum_{k=1}^m e_k\bigg)
\bigg(\prod_{k=1}^m \rho(e_k) \bigg)de_1\cdots de_m \\
& \hspace{1cm} - \sum_{q_1=1}^Q \cdots \sum_{q_m=1}^Q
\bigg(\prod_{k=1}^m \widetilde{w}_{q_k} \bigg) \cdot Y_{t_{n+1}}\bigg( x_i + \sum_{k=1}^m \widetilde{a}_{q_k}\bigg)\Bigg| \le C Q^{-r},
\end{aligned}
\end{equation}
where the convergence rate $r > 0$ and the constant $C$ are determined by the smoothness of $Y_{n+1}$ and $\rho$. 
The same strategy is also applied to $\widehat{\mathbb{E}}_{t_n,M_f}^{x_i}[f]$.
\end{assum}

Then, with Assumption \ref{assump1}, 
the error estimate for the fully-discrete approximation \eqref{eq:full} is given in the following theorem.
\begin{thm}\label{err_full}
Let $Y_{t_n}$ and ${Y_{n,p}^i}$, for $n = 0, 1, \ldots, N$, $i \in \mathbb{Z}$, be the exact solution of the BSDE \eqref{eq:BSDE2}
and the fully-discrete approximation obtained by the scheme \eqref{eq:full} with $\theta = \frac{1}{2}$, respectively.
Under Assumption \ref{assump1} and the conditions in Theorems \ref{err_lem3} and \ref{TL}, the error 
$e_n^i = Y_{t_n}^{x_i} - Y_{n,p}^i$ can be bounded by
\begin{equation}
\max_{i\in \mathbb{Z}} |e_n^i| \le C \left[ (\Delta t)^2 + 
(\lambda \Delta t)^{M_y} +(\lambda \Delta t)^{M_f+1} + Q^{-r} + (\Delta x)^{p+1}\right],
\end{equation}
where the constant $C$ 
is the same constant from \eqref{eq:CS1}.
\end{thm}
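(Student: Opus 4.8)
The plan is to control the deterministic global error $e_n^i = Y_{t_n}^{x_i}-Y_{n,p}^i$ (with $Y_{t_n}^{x_i}=u(T-t_n,x_i)$) by a backward-in-time discrete Gronwall recursion, in the same spirit as the proof of Theorem~\ref{TL}, but now tracking the three additional spatial error sources that separate the fully-discrete scheme~\eqref{eq:full} from the semi-discrete scheme~\eqref{eq:semi}. First I would subtract~\eqref{eq:full} at the grid point $x_i$ (with $\theta=\tfrac12$) from the exact representation~\eqref{s3:e3}, and then insert the intermediate quantities $\mathbb{E}_{t_n,M_y}^{x_i}$ (the \emph{truncated} Poisson series with exact integrals) and $\widehat{\mathbb{E}}_{t_n,M_y}^{x_i}[Y_{t_{n+1}}]$ (the truncated series with quadrature, applied to the \emph{exact} solution $Y_{t_{n+1}}=u(T-t_{n+1},\cdot)$). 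This splits the leading difference into four pieces: (i) a series-truncation error $\mathbb{E}_{t_n}^{x_i}-\mathbb{E}_{t_n,M_y}^{x_i}$ applied to $Y_{t_{n+1}}$; (ii) a quadrature error; (iii) an interpolation error $\widehat{\mathbb{E}}_{t_n,M_y}^{x_i}[Y_{t_{n+1}}-I_pY_{t_{n+1}}]$, with $I_p$ the piecewise Lagrange interpolation operator; and (iv) the propagated error $\widehat{\mathbb{E}}_{t_n,M_y}^{x_i}[I_pY_{t_{n+1}}-Y_{n+1,p}]=\widehat{\mathbb{E}}_{t_n,M_y}^{x_i}\big[\sum_{\jj}L_{\jj}\,e_{n+1}^{\jj}\big]$, where the $L_{\jj}$ are the Lagrange basis functions. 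The same splitting applies to the $f$-term, and the remaining $\tfrac{\Delta t}{2}$-difference is absorbed by the Lipschitz continuity of $f$, exactly as in Theorem~\ref{TL}.

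Second, I would bound each source using Lemma~\ref{lem1}, invoked with $k=\max\{2,p+1\}$ so that $u\in\mathcal{C}_b^{p+1}$, which controls both $R_n$ and the interpolation error. The decisive bookkeeping is which sources carry a factor $\lambda\Delta t$. Because the $m=0$ term of~\eqref{eq:EY} involves no integral and evaluates $Y_{t_{n+1}}$ \emph{at the grid point} $x_i$ itself, the quadrature error, the interpolation error, and the series tail all enter only through the terms with $m\ge1$, whose total Poisson weight is $1-e^{-\lambda\Delta t}\le\lambda\Delta t$. Consequently the per-step contributions are $\mathcal{O}((\lambda\Delta t)^{M_y+1})$ for the series tail (from the leading neglected term $\tfrac{(\lambda\Delta t)^{M_y+1}}{(M_y+1)!}$ and boundedness of $u$), $\mathcal{O}(\lambda\Delta t\,Q^{-r})$ for the quadrature via Assumption~\ref{assump1}, and $\mathcal{O}(\lambda\Delta t\,(\Delta x)^{p+1})$ for the interpolation, since the Lagrange error is $\mathcal{O}((\Delta x)^{p+1})$ and vanishes at nodes. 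The analogous $f$-terms carry an extra $\Delta t$ from the coefficient $\tfrac{\Delta t}{2}$, giving $\mathcal{O}(\Delta t(\lambda\Delta t)^{M_f+1})$ together with strictly lower-order quadrature and interpolation contributions, while $|R_n|\le C(\Delta t)^3$ holds \emph{pointwise} by the argument of Theorem~\ref{err_lem3} applied to $\mathbb{E}_{t_n}^x[L^0L^0F]$, bounded directly by $\sup|L^0L^0F|$ in place of the Cauchy--Schwarz step.

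Third, collecting these yields a one-step recursion $\max_i|e_n^i|\le(1+C\Delta t)\max_{\jj}|e_{n+1}^{\jj}| + S$, with per-step source $S=C\big[(\Delta t)^3+(\lambda\Delta t)^{M_y+1}+\Delta t(\lambda\Delta t)^{M_f+1}+\lambda\Delta t\,Q^{-r}+\lambda\Delta t\,(\Delta x)^{p+1}\big]$. Iterating from the exact terminal data $e_N^i=0$ and using $(1+C\Delta t)^N\le e^{CT}$ with $N=T/\Delta t$ multiplies $S$ by $\mathcal{O}(1/\Delta t)$, which cancels exactly one power of $\Delta t$ in each source and reproduces the stated bound. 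The main obstacle is establishing the amplification factor $1+C\Delta t$ rather than a factor involving the Lebesgue constant $\Lambda_p$ of the interpolation, which would naively produce a spurious $\Lambda_p^{\,N}$ growth. The resolution---and the crux of the stability of~\eqref{eq:full}---is that in the propagated term (iv) the dominant $m=0$ contribution reproduces $e_{n+1}^i$ \emph{exactly} at the node, with no $\Lambda_p$, so that $\Lambda_p$ multiplies only the $m\ge1$ terms of weight $\le\lambda\Delta t$; this gives an amplification $e^{-\lambda\Delta t}+\lambda\Delta t\,\Lambda_p=1+\lambda(\Lambda_p-1)\Delta t+\mathcal{O}((\Delta t)^2)$, confining $\Lambda_p$ to the $\mathcal{O}(\Delta t)$ coefficient and keeping the recursion uniformly stable.
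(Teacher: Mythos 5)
Your proposal is correct and follows essentially the same route as the paper's proof: the identical four-way splitting of the leading difference (series truncation, quadrature, interpolation, and propagated error, i.e., the paper's $I_{11},I_{12},I_{13},I_{14}$), the same Lipschitz absorption of the $f$-difference at $t_n$, the same key stability observation that the $m=0$ Poisson term reproduces the nodal error exactly so the Lebesgue constant only enters the $\mathcal{O}(\Delta t)$ coefficient, and the same backward Gronwall induction that trades one power of $\Delta t$ for the sum over $N=T/\Delta t$ steps. Your two refinements---bounding $R_n$ pointwise via $\sup|L^0L^0F|$ and explicitly invoking Lemma~\ref{lem1} with $k=\max\{2,p+1\}$ to justify the $(\Delta x)^{p+1}$ interpolation bound---are minor tightenings of the paper's argument rather than a different approach.
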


\begin{proof} 
At each grid point $x_i \in \mathcal{S}_h$, substituting $X_{t_n} = x_i$ into \eqref{s3:e3}, we have
\begin{equation}\label{s4:e3}
\begin{aligned}
Y_{t_n}^{x_i} =\mathbb{E}_{t_n}^{x_i}[Y_{t_{n+1}}]
& +\frac{\Delta t}{2} \mathbb{E}_{t_n}^{x_i}[f(t_{n+1},X_{t_{n+1}}, Y_{t_{n+1}})] +\frac{\Delta t}{2} f(t_n, x_i,Y_{t_n}^{x_i}) +R_n,
\end{aligned}
\end{equation}
where $Y_{t_n}^{x_i}$ is the value of the exact solution at the grid point $(t_n, x_i)$. 
Subtracting \eqref{s4:e3} from \eqref{eq:full} leads to
\begin{equation}\label{eni}
\begin{aligned}
e_n^i & = \mathbb{E}_{t_n}^{x_i}[Y_{t_{n+1}}] - \widehat{\mathbb{E}}_{t_n,M_y}^{x_i}[{Y}_{n+1,p}] + \frac{\Delta t}{2} 
 \Big\{\mathbb{E}_{t_n}^{x_i}[f(t_{n+1},X_{t_{n+1}}, Y_{t_{n+1}})]\\
 & \hspace{0.3cm} -  \widehat{\mathbb{E}}_{t_n,M_f}^{x_i}[f(t_{n+1}, X_{n+1}, {Y}_{n+1,p})]\Big\}
 + \frac{\Delta t}{2} [f(t_n, x_i,Y_{t_n}^{x_i}) - f(t_n, x_i, Y_{n,p}^i)] + R_n\\
 & = I_1 + I_2 + I_3 + R_n,
\end{aligned}
\end{equation}
where 
\begin{equation}
\begin{aligned}
& I_1 =  \mathbb{E}_{t_n}^{x_i}[Y_{t_{n+1}}] - \widehat{\mathbb{E}}_{t_n,M_y}^{x_i}[{Y}_{n+1,p}],\\
& I_2 = \frac{\Delta t}{2} \Big\{\mathbb{E}_{t_n}^{x_i}[f(t_{n+1},X_{t_{n+1}}, Y_{t_{n+1}})]
-  \widehat{\mathbb{E}}_{t_n,M_f}^{x_i}[f(t_{n+1}, X_{n+1}, {Y}_{n+1,p})]\Big\},\\
& I_3 = \frac{\Delta t}{2} [f(t_n, x_i,Y_{t_n}^{x_i}) - f(t_n, x_i, {Y}_{n,p}^i)].
\end{aligned}
\end{equation}
Due to the Lipschitz continuity of $f$, it is easy to see that
\begin{equation}\label{I3}
|I_3| \le C \Delta t |e_n^i|,
\end{equation}
where $C$ depends on the Lipschitz constant of $f$. We can split $I_1$ in the following way
\begin{equation}
\begin{aligned}
I_1 & = \underbrace{\mathbb{E}_{t_n}^{x_i}[Y_{t_{n+1}}] - {\mathbb{E}}_{t_n,M_y}^{x_i}[Y_{t_{n+1}}]}_{I_{11}} 
+  \underbrace{{\mathbb{E}}_{t_n,M_y}^{x_i}[Y_{t_{n+1}}] - \widehat{\mathbb{E}}_{t_n,M_y}^{x_i}[Y_{t_{n+1}}]}_{I_{12}} \\
& \hspace{0.4cm}+ \underbrace{\widehat{\mathbb{E}}_{t_n,M_y}^{x_i}[Y_{t_{n+1}} - {Y}_{t_{n+1},p}]}_{I_{13}}+
 \underbrace{\widehat{\mathbb{E}}_{t_n,M_y}^{x_i}[{Y}_{t_{n+1},p}-{Y}_{n+1,p}]}_{I_{14}},
\end{aligned}
\end{equation}
where $Y_{t_{n+1},p}$ is the $p$-th order Lagrange polynomial interpolant of $Y_{t_{n+1}}$. 
By the definition of $\mathbb{E}_{t_n}^{x_i}[\,\cdot\,]$ and ${\mathbb{E}}_{t_n,M_y}^{x_i}[\,\cdot\,]$ in \eqref{eq:EY} and \eqref{appro_EY},
we have
\begin{equation}
\begin{aligned}
|I_{11}| & = \sum_{m = M_y+1}^\infty e^{-\lambda\Delta t} \frac{(\lambda \Delta t)^m }{m!} 
\int_{E} \cdots \int_{E}  Y_{t_{n+1}}\bigg(x_i + \sum_{k=1}^m e_k\bigg)
\bigg(\prod_{k=1}^m \rho(e_k) \bigg)de_1\cdots de_m\\
& \le C (\lambda \Delta t)^{M_y+1},
\end{aligned}
\end{equation}
for $\lambda \Delta t < 1$, where $C$ depends on the upper bounds of $|Y_{t_{n+1}}|$ and $|\rho|$. For the error $I_{12}$, 
we obtain
\begin{equation}
\begin{aligned}
 |I_{12}| = & \left|{\mathbb{E}}_{t_n,M_y}^{x_i}[Y_{t_{n+1}}] - \widehat{\mathbb{E}}_{t_n,M_y}^{x_i}[Y_{t_{n+1}}] \right|\\
 = & \Bigg| \sum_{m=M_y+1}^{\infty} e^{-\lambda\Delta t} \frac{(\lambda \Delta t)^m }{m!} 
 \Bigg\{\int_{E} \cdots \int_{E}  Y_{t_{n+1}}\bigg(x_i + \sum_{k=1}^m e_k\bigg)
\bigg(\prod_{k=1}^m \rho(e_k) \bigg)de_1\cdots de_m\\
&\hspace{2cm} -  \sum_{q_1=1}^Q \cdots \sum_{q_m=1}^Q
\bigg(\prod_{k=1}^m \widetilde{w}_{q_k} \bigg) \cdot Y_{t_{n+1}}\bigg( x_i + \sum_{k=1}^m \widetilde{a}_{q_k}\bigg)\Bigg\} \Bigg|\\
\le & \sum_{m=M_y+1}^{\infty} e^{-\lambda\Delta t} \frac{(\lambda \Delta t)^m }{m!} \cdot \left( C Q^{-r}  \right)\\
\le &  C  \lambda \Delta t Q^{-r}
\end{aligned}
\end{equation}
for $\lambda \Delta t < 1$ where $C$ depends on the upper bound of the derivatives of $Y_{t_{n+1}}$. 
For $I_{13}$, based on the fact that $Y_{t_{n+1}}(x_i) - {Y}_{t_{n+1},p}(x_i) = 0$ for $x_i \in \mathcal{S}_h$ and 
the classic error bound of Lagrange interpolation, we have
\begin{equation}
\begin{aligned}
 |I_{13}| & = \left|\widehat{\mathbb{E}}_{t_n,M_y}^{x_i}\left[Y_{t_{n+1}} - {Y}_{t_{n+1},p}\right] \right|\\
 & \le e^{-\lambda\Delta t} \left|Y_{t_{n+1}}(x_i) - {Y}_{t_{n+1},p}(x_i)\right|
 +  \sum_{m=1}^{M_y} e^{-\lambda\Delta t} \frac{(\lambda \Delta t)^m }{m!}  \\
& \hspace{0.5cm} \cdot \Bigg[ \sum_{q_1=1}^Q \cdots \sum_{q_m=1}^Q
\bigg(\prod_{k=1}^m \widetilde{w}_{q_k} \bigg) \left|Y_{t_{n+1}}\bigg( x_i + \sum_{k=1}^m \widetilde{a}_{q_k}\bigg) - 
{Y}_{t_{n+1},p}\bigg( x_i + \sum_{k=1}^m \widetilde{a}_{q_k}\bigg)\right|\Bigg]\\
& \le C (\Delta x)^{p+1} \cdot \sum_{m=1}^{M_1} e^{-\lambda\Delta t} \frac{(\lambda \Delta t)^m }{m!}\\
& \le C \lambda \Delta t (\Delta x)^{p+1}.
\end{aligned}
\end{equation}
Also, for $I_{14}$, we deduce that
\begin{equation}
\begin{aligned}
 |I_{14}| & \le \left|\widehat{\mathbb{E}}_{t_n,M_y}^{x_i}[{Y}_{t_{n+1},p}- {Y}_{n+1,p}]\right| \\
 & \le   e^{-\lambda\Delta t} \left|{Y}_{t_{n+1},p} (x_i) - {Y}_{n+1,p}(x_i) \right| + 
   \sum_{m=1}^{M_y} e^{-\lambda\Delta t} \frac{(\lambda \Delta t)^m }{m!} \cdot \Lambda_p^{\Delta x} \max_{i\in \mathbb{Z}}  |e_{n+1}^i| \\
 & \le \left(1 + C\Delta t \Lambda_p^{\Delta x}\right) \max_{i\in \mathbb{Z}} |e_{n+1}^i|,
\end{aligned}
\end{equation}
where $\Lambda_p^{\Delta x}$ is the Lebesgue constant of the $p$-th order Lagrange interpolant under the mesh $\mathcal{S}_h$.

Combining $I_{11}, I_{12}, I_{13}, I_{14}$, we obtain the following upper bound of $|I_1|$, i.e.,
\begin{equation}\label{I1}
|I_1| \le C\left[ (\lambda \Delta t)^{M_y+1} + \lambda \Delta t Q^{-r} + \lambda \Delta t (\Delta x)^{p+1}\right] + 
\left(1 + C\Delta t \Lambda_p^{\Delta x}\right)\max_{i\in \mathbb{Z}} |e_{n+1}^i|.
\end{equation}
Similarly, we can obtain the following upper bound of $|I_2|$, i.e.,
\begin{equation}\label{I2}
|I_2| \le C\Delta t \left[ (\lambda \Delta t)^{M_f+1} + \lambda \Delta t Q^{-r} + \lambda \Delta t(\Delta x)^{p+1} 
+ \left(1 + C\Delta t \Lambda_p^{\Delta x}\right)\max_{i\in \mathbb{Z}} |e_{n+1}^i|\right].
\end{equation}
Substituting \eqref{I1}, \eqref{I2} and \eqref{I3} into \eqref{eni}, we have
\begin{equation}
\begin{aligned}
&(1 - C\Delta t) \max_{i\in \mathbb{Z}} |e_n^i| \le (1 + C\Delta t) \max_{i\in \mathbb{Z}} |e_{n+1}^i| \\
&\hspace{1cm}+ C\left[(\lambda \Delta t)^{M_y+1} + \Delta t (\lambda \Delta t)^{M_f+1}\right] + \lambda \Delta tQ^{-r} + C\lambda \Delta t(\Delta x)^{p+1} + C(\Delta t)^3.
\end{aligned}
\end{equation}
Therefore, for sufficiently small $\Delta t$, we obtain, by induction, 
\begin{equation}
\max_{i\in \mathbb{Z}} |e_n^i| \le C \left[ (\Delta t)^2 + 
(\lambda \Delta t)^{M_y} +(\lambda \Delta t)^{M_f+1} + Q^{-r} + (\Delta x)^{p+1}\right].
\end{equation}
\end{proof}
%

%
%
\section{Numerical examples}\label{sec:ex}
In this section, we report on the results of three one-dimensional numerical examples, that illustrate the accuracy
and efficiency of the proposed schemes \eqref{eq:full}. We take uniform partitions in both temporal and 
spatial domains with time and space step sizes $\Delta t$ and $\Delta x$, respectively. 
The number of time steps is denoted by $N$, which is given by $N = \frac{T}{\Delta t}$, with $T$ is the terminal time. 
For the sake of illustration, we only solve the nonlocal problems on bounded spatial domains. Lagrange interpolation and
tensor-product quadrature rules are used to approximate $\mathbb{E}_{t_n}^{x_i}[\,\cdot\,]$, where the one-dimensional 
quadrature rule for each example is chosen according to the property of the kernel $\gamma(e)$, the initial condition $u_0$
and the forcing term $f$. Our main goal is to test the accuracy and convergence rate of the proposed scheme \eqref{eq:full}
with respect to $\Delta t$ and $\Delta x$. To this end, according to the analysis in Theorem \ref{err_full}, we always set the number
of quadrature points to be sufficiently large so that the error contributed by the use of quadrature rules is too small to
affect the convergence rate. 

\subsection{Symmetric kernel}\label{sec:ex1}
First we consider the following nonlocal diffusion problem in $[0,T]$:
\begin{equation}\label{ex1:eq}
\left\{
\begin{aligned}
& \frac{\partial u}{\partial t} - \frac{1}{\delta^3} \int_{-\delta}^{\delta} \Big(u(t,x+e)-u(t,x)\Big) de  = g(t,x), \;\; t > 0,\\
& u(0,x)  = \varphi(x),\\
\end{aligned}\right.
\end{equation}
where $\delta > 0$ which corresponds to the symmetric kernel  
\begin{equation}
\gamma(e) = \left\{
\begin{aligned}
& \frac{1}{\delta^3}, & \text{for } e \in [-\delta, \delta],\\
& 0, & \text{for } e \notin [-\delta, \delta].\\
\end{aligned}\right.
\end{equation}
We choose the exact solution to be 
\[
u(t,x) = (-x^3 + x^2) \exp\Big(-\frac{t}{10}\Big),
\]
so that the forcing term $g$ is given by
\[
g(t,x) = -\frac{u(t,x)}{10} + \Big(2x - \frac{2}{3}\Big) \exp\Big(-\frac{t}{10}\Big),
\]
and the initial condition $\varphi(x)$ can be determined from $u$ accordingly. After converting the problem \eqref{ex1:eq} 
to a BSDE of the form in \eqref{eq:BSDE2}, we have
\[
\lambda = \frac{2}{\delta^2}, \;\;  \rho(e) = \frac{1}{2\delta} \mathcal{I}_{[-\delta, \delta]}(e),\;\; b = 0,\;\; f(t, \cdot, \cdot) = g(T-t, \cdot, \cdot),
\]
where $\mathcal{I}_{[-\delta, \delta]}(e)$ is a characteristic function of the interval $[-\delta, \delta]$. 

We set the terminal time $T = 1$ and solve \eqref{ex1:eq} on the spatial domain $[0,1] \in \mathbb{R}$. 
Since the density function $\rho(e)$ is uniform with the support $[-\delta, \delta]$, we use tensor-product 
Gauss-Legendre quadrature rules, with $Q = 16$, to approximate the integrals involved in ${\mathbb{E}}_{t_n}^{x_i}[\,\cdot\,]$.
First, we test the convergence rate with respect to $\Delta t$. To this end, we set $N_x = 65$ and use piecewise cubic Lagrange 
interpolation ($p=3$) to construct $Y_{n,p}$ for $n = 0, \ldots, N-1$, so that the time discretization error dominates the total error.
For $\delta = 1$, the number $N$ of time steps is set to $4, 8, 16, 32, 64$, respectively. The numerical results are shown in Table \ref{ex1:t1}.
As expected, the convergence rate with respect to $\Delta t$ depends not only on the value of $\theta$, but also the number of jumps
retained in constructing $\mathbb{E}_{t_n,M_y}^{x_i}[\,\cdot\,]$ and $\mathbb{E}_{t_n,M_f}^{x_i}[\,\cdot\,]$. For example, when $M_y = M_f = 0$, i.e., 
no jump is included in $\mathbb{E}_{t_n,M_y}^{x_i}[\,\cdot\,]$ and $\mathbb{E}_{t_n,M_f}^{x_i}[\,\cdot\,]$, the scheme \eqref{eq:full} fails to converge.
In general, in order to achieve a $k$-th order convergence ($k =1,2$), $M_y$ and $M_f$ must satisfy $M_y \ge k$ and $M_f \ge k-1$. 
\begin{table}[h!]
 \footnotesize
\begin{center}
\caption{Errors and convergence rates with respect to $\Delta t$ in Example \ref{sec:ex1} 
where $T =1$, $\delta = 1$, $N_x = 65$, $p=3$. } \label{ex1:t1}
\begin{tabular}{|c|c|c|c|c|c|c|}\hline
\multicolumn{7}{|c|}{$\| Y_{t_0} -Y_{0,p}\|_{\infty}$}\\
\hline   & $N=4$ & $N=8$ & $N=16$ & $N=32$ & $N=64$ & CR \\
\hline
$\theta = 0$, $M_y = 0$, $M_f=0$   &    1.932E-1  &   1.978E-1  &   2.000E-1  &   2.012E-1  &   2.017E-1  &   -0.015  \\ 
\hline
 $\theta = 0$, $M_y = 1$, $M_f=0$ &   7.958E-2  &   3.435E-2  &   1.572E-2  &   7.487E-3  &   3.649E-3  &   1.109  \\
 \hline
 $\theta = 0$, $M_y = 2$, $M_f=1$ &   3.673E-2  &   1.849E-2  &   9.279E-3  &   4.675E-3  &   2.326E-3  &   0.995  \\
 \hline\hline
 $\theta=1$, $M_y = 0$, $M_f=0$  &    2.111E-1  &   2.067E-1  &   2.045E-1  &   2.034E-1  &   2.028E-1  &   0.014  \\
  \hline
 $\theta=1$, $M_y = 1$, $M_f=0$  &    4.891E-2  &   2.581E-2  &   1.328E-2  &   6.736E-3  &   3.393E-3  &   0.964  \\
 \hline
 $\theta=1$, $M_y = 2$, $M_f=1$  &    6.170E-2  &   3.007E-2  &   1.468E-2  &   7.231E-3  &   3.585E-3  &   1.027  \\
 \hline\hline
 $\theta=\frac{1}{2}$, $M_y = 0$, $M_f=0$ &  2.030E-1  &   2.025E-1  &   2.023E-1  &   2.023E-1  &   2.023E-1  &   0.001  \\
  \hline
 $\theta=\frac{1}{2}$, $M_y = 1$, $M_f=0$  &    2.623E-2  &   1.326E-2  &   6.666E-3  &   3.342E-3  &   1.674E-3  &   0.993  \\
 \hline
 $\theta=\frac{1}{2}$, $M_y = 2$, $M_f=1$ &  3.207E-3  &   8.258E-4  &   2.094E-4  &   5.271E-5  &   1.322E-5  &   1.981  \\
\hline
 $\theta=\frac{1}{2}$, $M_y = 3$, $M_f=2$ &  3.330E-3  &   8.632E-4  &   2.196E-4  &   5.538E-5  &   1.391E-5  &   1.977  \\
\hline
\end{tabular}
\end{center}
\end{table}

We observe in Theorem \ref{err_full} that the errors with respect to $\Delta t$ from $\mathbb{E}_{t_n,M_y}^{x_i}[\,\cdot\,]$ 
and $\mathbb{E}_{t_n,M_f}^{x_i}[\,\cdot\,]$ are of order $(\lambda \Delta t)^{M_y}$ and $(\lambda \Delta t)^{M_f+1}$, respectively.
If the intensity $\lambda$ is large, i.e., the horizon $\delta$ is small, the value of $\delta$ will affect the error decay 
in the pre-asymptotic region and change the total error up to a constant. Such phenomenon is investigated by setting $\delta = 0.3$ and $0.4$, 
and $\theta = 1$ and $\frac{1}{2}$. The results are shown in Figure \ref{ex1:f1}. In Figure \ref{ex1:f1}(a) with $\theta = 1$, 
in the case that $\delta = 0.3$ ($\lambda \approx  22$), $M_y = 1$ and $M_f = 0$, 
the error is actually of order $\mathcal{O} (\lambda \Delta t)$. When one more jump is included, i.e., $M_y = 2$ and $M_f = 1$, we observe that
the convergence rate remains the same but the total error is significantly reduced, since the error contributed by
$\mathbb{E}_{t_n,M_y}^{x_i}[\,\cdot\,]$ and $\mathbb{E}_{t_n,M_f}^{x_i}[\,\cdot\,]$ is reduced to $\mathcal{O}((\lambda \Delta t)^2)$.
\begin{figure}[h!]
\begin{center}
\includegraphics[scale =0.4]{./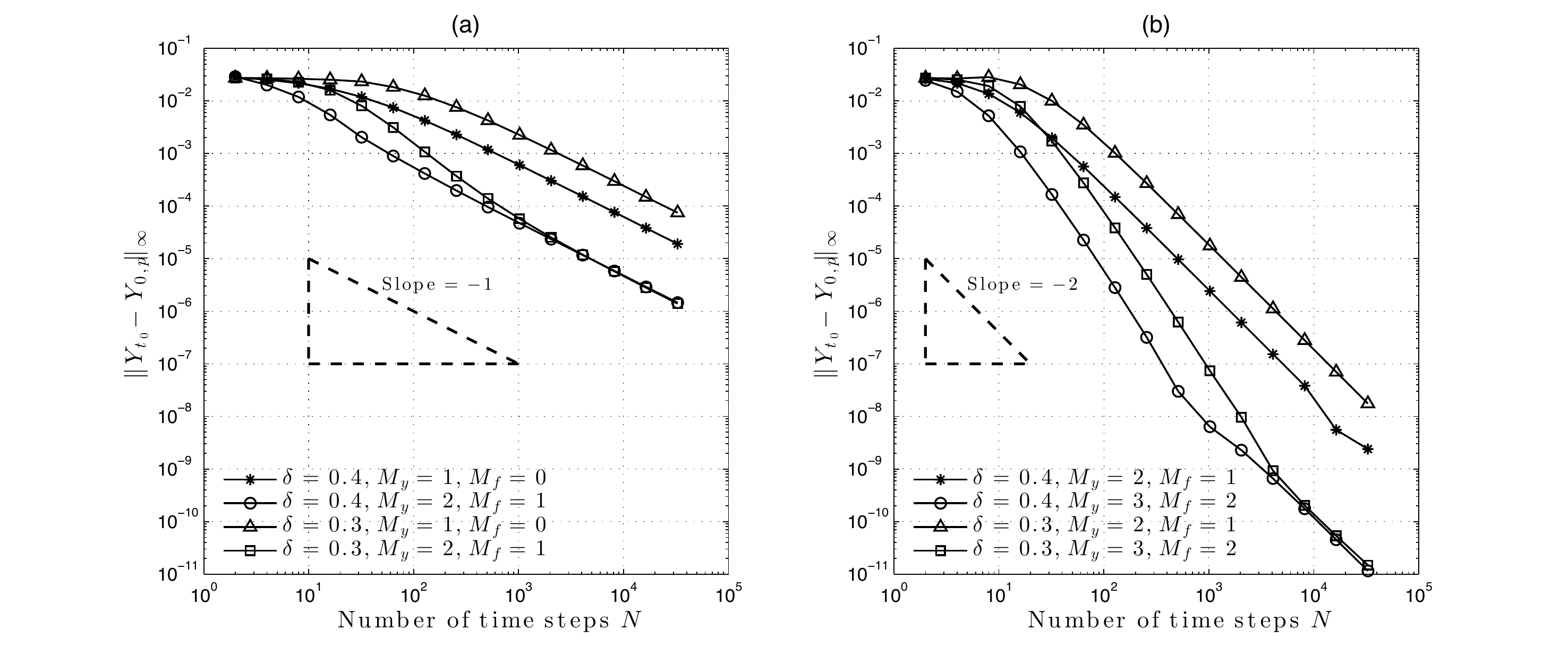}
\end{center}\caption{Error decays for (a) $\theta = 1$ and (b) $\theta = \frac{1}{2}$ in Example \ref{sec:ex1}.}\label{ex1:f1}
\end{figure}

Next, we test the convergence rate with respect to $\Delta x$ by setting $\delta = 1$, $T = 1$, $\theta = \frac{1}{2}$, $N = 1024$,
$M_y = 3$ and $M_f = 2$.
We divide the spatial interval $[0,1]$ into $N_x$ elements with $\Delta x = 2^{-3}, 2^{-4}, 2^{-5}, 2^{-6}, 2^{-7}$.
The error is measured in $L^\infty$ and $L^2$ norms. In Table \ref{ex1:t2}, we can see that the numerical results verify
the theoretical analysis in \S \ref{sec:err}.
\begin{table}[h!]
 \footnotesize
\begin{center}
\caption{Errors and convergence rates with respect to $\Delta x$ in Example
\ref{sec:ex1} where $\delta = 1$, $T = 1$, $\theta = \frac{1}{2}$, $N = 1024$, $M_y = 3$ and $M_f = 2$.} \label{ex1:t2}
\begin{tabular}{|c||c|c||c|c|}\hline
 & \multicolumn{2}{|c||}{Linear interpolation} &  \multicolumn{2}{|c|}{Quadratic interpolation}\\
\hline   
 $\Delta x$ & $\|Y_{t_0}-Y_{0,1} \|_{L^2}$ &  $\|Y_{t_0}-Y_{0,1} \|_{L^\infty}$ &  $\|Y_{t_0}-Y_{0,2} \|_{L^2}$ &  $\|Y_{t_0} -Y_{0,2}\|_{L^\infty}$\\
\hline
$2^{-3}$ & 1.227E-03 &  2.816E-03 & 1.862E-04  &  2.969E-04 \\
\hline
$2^{-4}$ & 3.266E-04 &  7.356E-04 & 2.278E-05  &  3.881E-05 \\
\hline
$2^{-5}$ & 6.586E-05 &  1.772E-04 & 2.792E-06  &  4.661E-06 \\
\hline
$2^{-6}$ & 1.651E-05 &  4.481E-05 & 3.491E-07  & 6.252E-07 \\
\hline
$2^{-7}$ & 4.167E-06 &  1.103E-05 & 4.738E-08  & 9.188E-08 \\
\hline
CR         & 2.071        &   2.001        &     2.991      &    2.927   \\ 
\hline
\end{tabular}
\end{center}
\end{table}

\subsection{Singular kernel}\label{sec:ex3}
We consider the following nonlocal diffusion problem in $[0,T]$
\begin{equation}\label{ex3:eq}
\left\{
\begin{aligned}
& \frac{\partial u}{\partial t} - \frac{1}{\delta^2\sqrt{\delta}} \int_{-\delta}^{\delta} \frac{u(t,x+e)-u(t,x)}{\sqrt{|e|}}de  = g(t,x), \;\; t > 0,\\
& u(0,x)  = \varphi(x),\\
\end{aligned}\right.
\end{equation}
where $\delta > 0$ which corresponds to the singular kernel $\gamma(e)$  
\begin{equation}
\gamma(e) = \left\{
\begin{aligned}
& \frac{1}{\delta^2\sqrt{\delta |e|}} , & \text{for } e \in [-\delta, \delta],\\
& 0, & \text{for } e \notin [-\delta, \delta].\\
\end{aligned}\right.
\end{equation}
We choose the exact solution to be 
\[
u(t,x) = (x+t)^2,
\]
so that the forcing term $g$ is given by
\[
g(t,x) =  2(x+t) - \frac{4}{5}
\]
and the initial condition $\varphi(x)$ can be determined from $u$ accordingly. After converting the problem \eqref{ex1:eq} 
to a BSDE of the form in \eqref{eq:BSDE2}, we have
\[
\lambda = \frac{4}{\delta^2}, \;\; \rho(e) = \frac{1}{4\sqrt{\delta|e|}}\mathcal{I}_{[-\delta, \delta]}(e),\;\; b = 0,\;\; f(t, \cdot, \cdot) = g(T-t, \cdot, \cdot),
\]
where $\mathcal{I}_{[-\delta, \delta]}(e)$ is a characteristic function of the interval $[-\delta, \delta]$. 

We set the terminal time $T = 0.25$ and solve the equation \eqref{ex3:eq} on the spatial domain $[0,1] \in \mathbb{R}$.
It is observed that the density function $\rho(e)$ is singular at $e = 0$, but it is still integrable in the interval $[-\delta ,\delta]$.
Recalling that ${1}/{\sqrt{|e|}}$ is a special case of the kernel of the Gauss-Jacobi quadrature rule, i.e.
$\int_{a}^b \psi(x) (b-x)^{\alpha} (x-a)^\beta dx$ with $\alpha, \beta \in [-1,1]$, where the kernel is given by $(b-x)^{\alpha} (x-a)^\beta$. 
Thus, the integrals involved in 
$\mathbb{E}_{t_n,M_y}^{x_i}[\,\cdot\,]$ and $\mathbb{E}_{t_n,M_f}^{x_i}[\,\cdot\,]$ can be accurately approximated by setting
$a = 0$, $b = \delta$, $\alpha = 0$ and $\beta = -\frac{1}{2}$. In this example, we use 16-point Gauss-Jacobi rule such that
the quadrature error can be ignored. We test the convergence rates with respect to $\Delta t$ by setting $\delta = 1$, $N_x = 65$ and 
$p = 3$, and the results are given in Table \ref{ex3:t1}. As expected, due to the use of Gauss-Jacobi rule, the temporal truncation error 
dominates the total error and the theoretical result in Theorem \ref{err_full} has been verified as well. The convergence with
respect to $\Delta x$ is also tested by setting $p = 1$, $T = 0.25$, $\theta = \frac{1}{2}$, $N = 1024$, $M_y = 3$ and $M_f = 2$.
Table \ref{ex2:t2} shows the results in two cases where $\delta = 1$ and 0.1, respectively. We see that, for different horizon values, the spatial discretization error decays at the same rate verifying the theoretical error estimates in \S \ref{sec:err}.
\begin{table}[h!]
 \footnotesize
\begin{center}
\caption{Errors and convergence rates with respect to $\Delta t$ in Example
\ref{sec:ex3} where $T =0.25$, $\delta = 1$, $N_x = 65$, $p=3$.  } \label{ex3:t1}
\begin{tabular}{|c|c|c|c|c|c|c|}\hline
\multicolumn{7}{|c|}{$\| Y_{t_0} -Y_{0,p}\|_{\infty}$}\\
\hline   & $N=4$ & $N=8$ & $N=16$ & $N=32$ & $N=64$ & CR \\
\hline
$\theta = 0$, $M_y=0, M_f=0$   &    9.732E-1  &   9.554E-1  &   9.461E-1  &   9.413E-1  &   9.390E-1  &   0.013 \\ 
\hline
 $\theta = 0$, $M_y = 1, M_f = 0$ &   2.099E-1  &   1.108E-1  &   5.698E-2  &   2.890E-2  &   1.456E-2  &   0.964  \\
 \hline
 $\theta = 0$, $M_y = 2, M_f = 1$ &   8.618E-2  &   4.131E-2  &   2.013E-2  &   9.925E-3  &   4.926E-3  &   1.032  \\
 \hline\hline
 $\theta=1$, $M_y = 0,M_f =0$  &    8.958E-1  &   9.167E-1  &   9.267E-1  &   9.317E-1  &   9.341E-1  &   -0.014  \\
  \hline
 $\theta=1$, $M_y = 1,M_f =0$  &    1.239E-1  &   6.669E-2  &   3.461E-2  &   1.763E-2  &   8.900E-3  &   0.952  \\
 \hline
 $\theta=1$, $M_y = 2,M_f =1$  &    8.485E-2  &   5.274E-2  &   2.959E-2  &   1.577E-2  &   8.149E-3  &   0.950  \\
 \hline\hline
 $\theta=\frac{1}{2}$, $M_y = 0, M_f =0$ &  9.345E-1  &   9.360E-1  &   9.364E-1  &   9.365E-1  &   9.365E-1  &   -0.001  \\
  \hline
 $\theta=\frac{1}{2}$, $M_y = 1, M_f =0$  &    1.669E-1  &   8.874E-2  &   4.579E-2  &   2.327E-2  &   1.173E-2  &   0.959  \\
 \hline
 $\theta=\frac{1}{2}$, $M_y = 2, M_f =1$ &  1.634E-2  &   4.386E-3  &   1.136E-3  &   2.889E-4  &   7.286E-5  &   1.954  \\
\hline
 $\theta=\frac{1}{2}$, $M_y = 3, M_f =2$ &  4.476E-3  &   1.079E-3  &   2.634E-4  &   6.498E-5  &   1.613E-5  &   2.029  \\
\hline
\end{tabular}
\end{center}
\end{table}

\begin{table}[h!]
 \footnotesize
\begin{center}
\caption{Errors and convergence rates with respect to $\Delta x$ in Example
\ref{sec:ex3} where $p = 1$, $T=0.25$, $\theta = 1/2$, $N = 1024$, $M_y = 3$ and $M_f = 2$.} \label{ex2:t2}
\begin{tabular}{|c||c|c||c|c|}\hline
 & \multicolumn{2}{|c||}{$\delta  = 1$} &  \multicolumn{2}{|c|}{$\delta  = 0.1$}\\
\hline   
 $\Delta x$ & $\|Y_{t_0} - Y_{0,p} \|_{L^2}$ &  $\|Y_{t_0} - Y_{0,p} \|_{L^\infty}$ &  $\|Y_{t_0} - Y_{0,p} \|_{L^2}$ &  $\|Y_{t_0} - Y_{0,p} \|_{L^\infty}$\\
\hline
$2^{-3}$ & 3.912E-03 &  5.232E-03 & 7.892E-03  &  1.046E-02 \\
\hline
$2^{-4}$ & 9.934E-04 &  1.318E-03 & 2.030E-03  &  2.618E-03 \\
\hline
$2^{-5}$ & 2.647E-04 &  3.482E-04 & 4.893E-04  &  5.787E-04 \\
\hline
$2^{-6}$ & 6.062E-05 &  8.057E-05 & 1.269E-04  &  1.180E-04 \\
\hline
$2^{-7}$ & 1.318E-05 &  1.862E-05 & 1.498E-05  & 2.541E-05 \\
\hline
CR         & 2.046        &   2.030        &     2.208      &    2.184   \\ 
\hline
\end{tabular}
\end{center}
\end{table}

\subsection{Non-symmetric kernel and discontinuous solution}\label{sec:ex2}
We consider the following nonlocal diffusion problem in $[0,T]$,
\begin{equation}\label{ex2:eq}
\left\{
\begin{aligned}
& \frac{\partial u}{\partial t} - \int_{-\delta}^{2\delta} \Big[u(t,x+e)-u(t,x)\Big] de  = g(t,x), \;\; t > 0,\\
& u(0,x)  = \varphi(x),\\
\end{aligned}\right.
\end{equation}
where $\delta > 0$ for which we have the non-symmetric kernel 
\begin{equation}
\gamma(e) = \left\{
\begin{aligned}
& 1, & \text{ if } e \in [-\delta, 2\delta],\\
& 0, & \text{ if } e \notin [-\delta, 2\delta].\\
\end{aligned}\right.
\end{equation}
We choose the exact solution 
\begin{equation}
u(t,x) = \left\{
\begin{aligned}
& x \sin(t), \hspace{0.3cm}   \text{ if } \; x < \frac{1}{2},\\
& x^2  \sin(t),  \hspace{0.15cm}\text{ if } \; x \ge \frac{1}{2}\\
\end{aligned}\right.
\end{equation}
so that the forcing term $g$ is given by
\begin{equation}
g(t,x) = \left\{
\begin{aligned}
& \sin(t) \left[ -\frac{(x+2\delta)^2}{2} + \frac{(x-\delta)^2}{2} + 3\delta x\right]  + x\cos(t), \hspace{1.5cm}  x <\frac{1}{2}-2\delta,\\
& \sin(t) \Bigg[-  \frac{1}{12} - \frac{(x+2\delta)^3}{3} + \frac{(x-\delta)^2}{2} + 3\delta x \Bigg] + x\cos(t),\;\;\;\; \frac{1}{2}-2\delta \le x< \frac{1}{2},\\
& \sin(t) \Bigg[-  \frac{1}{12} - \frac{(x+2\delta)^3}{3} + \frac{(x-\delta)^2}{2} + 3\delta x^2 \Bigg] + x^2\cos(t),\; \frac{1}{2}\le x <\frac{1}{2}+\delta,\\
& \sin(t) \Bigg[ - \frac{(x+2\delta)^3}{3} + \frac{(x-\delta)^3}{3} + 3\delta x^2\Bigg] + x^2\cos(t),\hspace{1.0cm} x\ge \frac{1}{2}+\delta\\
\end{aligned}
\right.
\end{equation}
and the initial condition $\varphi(x)$ can be determined from $u$ accordingly. After converting the problem \eqref{ex2:eq} 
to a BSDE of the form in \eqref{eq:BSDE2}, we have
\[
\lambda = 3\delta, \;\; \rho(e) = \frac{1}{3\delta}\mathcal{I}_{[-\delta,2\delta]}(e),\quad b = \frac{3}{2} \delta^2,\quad f(t, \cdot, \cdot) = g(T-t, \cdot, \cdot).
\]

Note that the kernel $\gamma(e)$ is non-symmetric so the drift coefficient $b$ is non-zero. However, as shown in \eqref{b},
non-zero $b$ is equal to the compensator of the underlying compound Poisson process, such that it cancels out the compensator 
in the forward SDE in \eqref{eq:BSDE2}, so that it does not appear in the numerical scheme \eqref{eq:full}. The most challenging aspect of this problem 
is the discontinuities in $u$ and $g$, which deteriorate the accuracy of the quadrature rule and the polynomial interpolation. 
Here, the trapezoidal quadrature rule so that the integral of the interpolating 
polynomials are evaluated exactly. 
To study the convergence of the scheme \eqref{eq:full} with respect to $\Delta x$, we set $T=0.5$, $\theta = \frac{1}{2}$ and $N = 512$.
First, we solve \eqref{ex2:eq} on a uniform spatial grid with $\Delta x = 2^{-3}, 2^{-4},2^{-5},2^{-6},2^{-7}$. The errors and convergence
rates are given in Table \ref{ex4:t1}. As we expected, due to the discontinuity, the $L^{\infty}$ error does not converge and the $L^2$ error converges
with $\mathcal{O}((\Delta x)^{\frac{1}{2}})$. 
\begin{table}[h!]
 \footnotesize
\begin{center}
\caption{Errors and convergence rates with respect to $\Delta x$ in Example 
\ref{sec:ex2} for $p = 1$, $T=0.5$, $\theta = \frac{1}{2}$, $N = 512$.} \label{ex4:t1}
\begin{tabular}{|c||c|c||c|c|}\hline
 & \multicolumn{2}{|c||}{$\delta  = 1$} &  \multicolumn{2}{|c|}{$\delta  = 0.1$}\\
\hline   
 $\Delta x$ & $\|Y_{t_0} - Y_{0,p} \|_{L^2}$ &  $\|Y_{t_0} - Y_{0,p} \|_{L^\infty}$ &  $\|Y_{t_0} - Y_{0,p} \|_{L^2}$ &  $\|Y_{t_0} - Y_{0,p} \|_{L^\infty}$\\
\hline
$2^{-3}$ & 3.001E-02 &  1.323E-01 & 2.503E-02  &  1.215E-01 \\
\hline
$2^{-4}$ & 2.287E-02 &  1.317E-01 & 1.751E-02  &  1.207E-01 \\
\hline
$2^{-5}$ & 1.467E-02 &  1.257E-01 & 1.231E-02  &  1.201E-01 \\
\hline
$2^{-6}$ & 1.107E-02 &  1.254E-01 & 8.676E-03  &  1.197E-01 \\
\hline
$2^{-7}$ & 7.045E-03 &  1.221E-01 & 6.126E-03  &  1.192E-01 \\
\hline
CR         & 0.523        &   0.030        &     0.507      &    0.007   \\ 
\hline
\end{tabular}
\end{center}
\end{table}

In order to improve the convergence rate with respect to the $L^2$ norm, one strategy is to utilize adaptive grids which are capable of automatically refining the spatial grid 
around the discontinuity. Here, we apply the one-dimensional adaptive hierarchical finite element method \cite{Gunzburger:2014hi,Zhang:2013en}. 
This approach can be seamlessly integrated into the sparse grid framework to alleviate the curse of dimensionality when solving high-dimensional 
nonlocal diffusion problems. Figure \ref{ex3:f1}(a) shows the solution surface $u(t,x)$ we recovered in order to approximate $u(0.5,x)$ within the spatial domain $[0,1]$; and Figure \ref{ex3:f1}(b) illustrates
the exact solution $u(0.5,x)$ and its approximation using 33 spatial grid points for which the $L^2$ error is only 9.745E-04. Further illustration is shown in Figure 
\ref{ex3:f2}(a) and Figure 
\ref{ex3:f2}(b) where, for the scheme \eqref{eq:full}, the error decay vs. the number of interpolating points and vs. the grid size $\Delta x$, respectively, of using uniform and adaptive grids are plotted. For the adaptive case, $\Delta x$ is the starting grid for the refinement process. We clearly see the half-order and the optimal second-order convergence rates for the uniform and adaptive grid cases, respectively.
We also study the 
convergence with respect to $\Delta t$ for $\theta = 1$ with the error tolerance for the adaptive grid is set to 0.01, 0.005, 0.001 and 0.0005. 
In Figure \ref{ex3:f2}(c), it can be seen that the desired convergence rate with respect to $\Delta t$ is achieved 
if the error is smaller than the tolerance; otherwise, the spatial discretization error will dominate and the total error will not decay as $\Delta t$ decreases.

Of course, no amount of refinement, whether adaptive or not, will reduce the $L^\infty$ norm error, i.e., it will remain ${\mathcal O}(1)$. However, if we omit the single element containing the discontinuity, we see the following from Figure \ref{ex3:f2}(d). For uniform grid refinement, the error remains ${\mathcal O}(1)$ due to pollution of the error into neighboring elements which are all of size $\Delta x$. However, for adaptive grid refinement, the $L^\infty$ error is close to ${\mathcal O}(h^2)$ because pollution is to neighboring elements of very small size.

\begin{figure}[h!]
\begin{center}
\includegraphics[scale =0.33]{./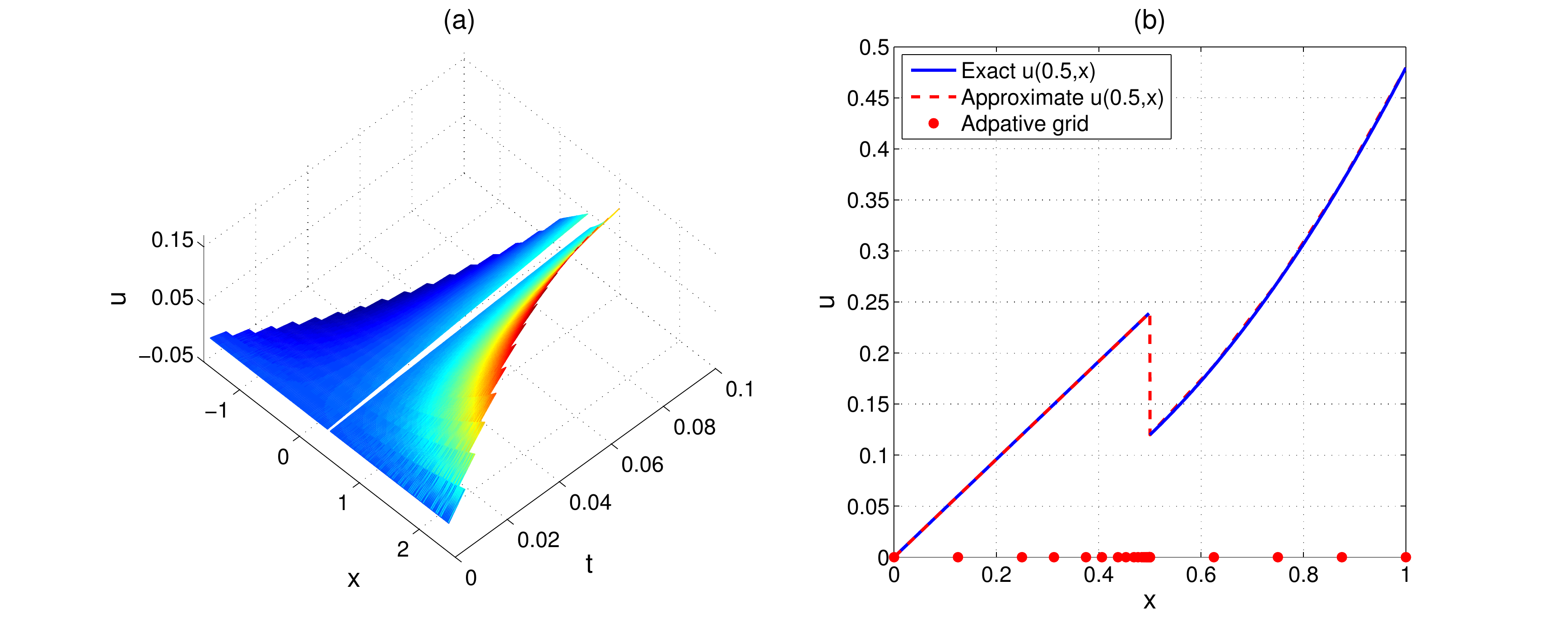}
\caption{(a) The surface of $u(t,x)$; (b) The exact solution $u(0.5,x)$ (solid line) and its approximation (dashed line) using 33 grid points (red dots).}\label{ex3:f1}
\end{center}
\end{figure}

\begin{figure}[h!]
\begin{center}
\includegraphics[scale =0.4]{./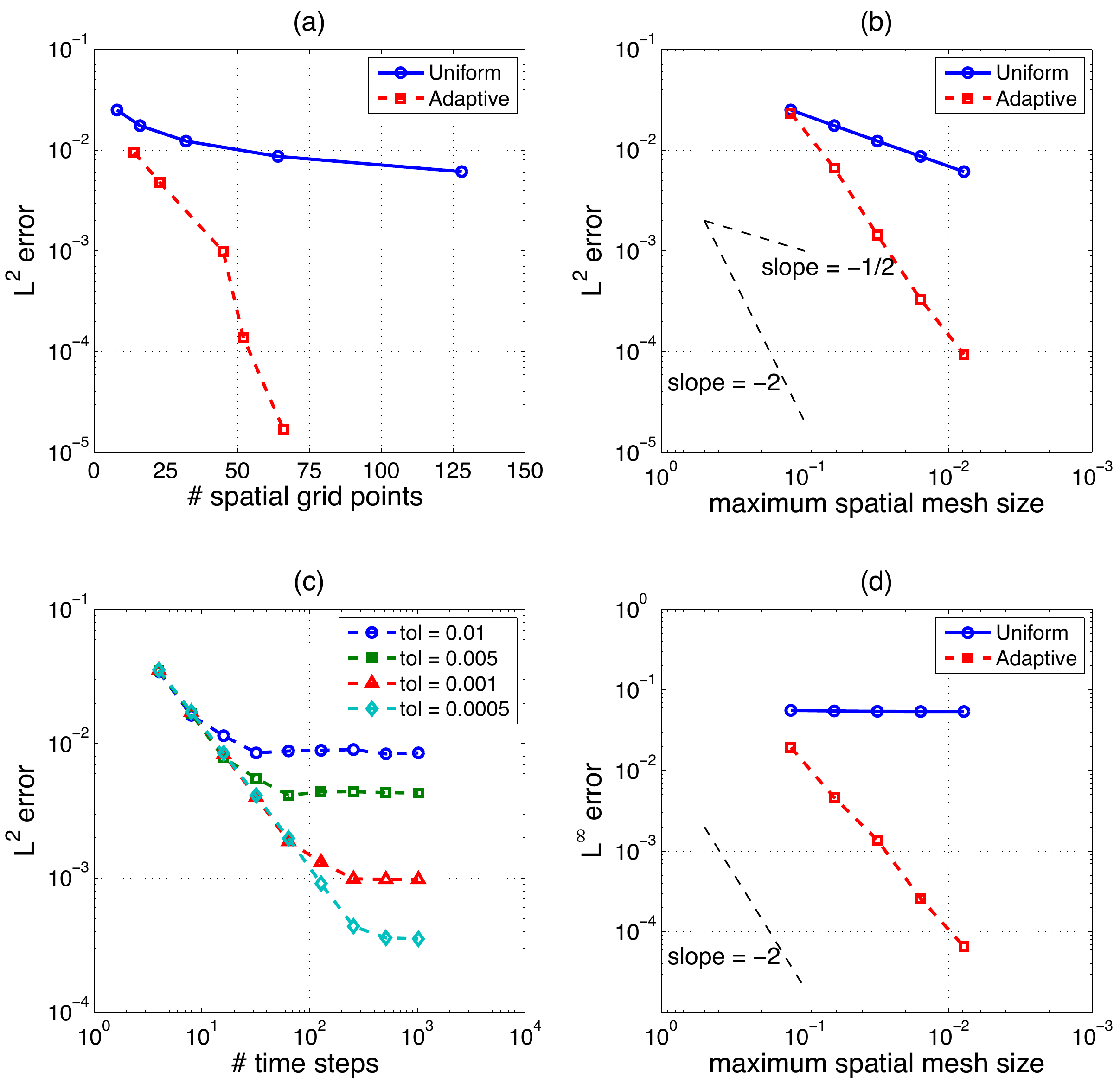}
\caption{Convergence of the $L^2$ norm of the error with respect to (a) the number of spatial grid points and (b) the spatial grid size $\Delta x$ using uniform and adaptive grids.
(c) Convergence with respect to $\Delta t$ for various tolerances of the adaptive spatial grid. (d) Convergence of the $L^\infty$ norm of the error (with the element containing the discontinuity ignored) with respect to $\Delta x$ using uniform and adaptive grids.}\label{ex3:f2}
\end{center}
\end{figure}

%
%
\section{Concluding remarks and future work}\label{sec:con}
In this work, we propose a novel stochastic numerical scheme for linear nonlocal diffusion problems based on the relationship between the PIDEs and a certain class of backward stochastic differential equations (BSDEs) with jumps. 
As discussed in \S \ref{sec:intro}, this effort focuses on solving low-dimensional nonlocal diffusion problems with high-order accuracy, where the potential applications will be non-Darcy flow in groundwater, or plasma transport in magnetically confined fields. However, our approach can be extended to solve moderately high-dimensional problems by incorporating our previous work on sparse-grid approximation \cite{Zhang:2013en,Gunzburger:2014hi}. 
Compared to standard finite element and collocation approaches for approximating linear nonlocal diffusion equations with integrable kernels, our method completely avoids the solution of dense linear systems.  Moreover, the ability to utilize 
high-order temporal and spatial discretization schemes, as well as efficient adaptive approximation, and the potential of massively parallel implementation, make our technique highly advantageous.
These assets have been verified by both theoretical analysis and numerical experiments. 

Our future efforts will focus on extending the proposed numerical schemes to the case of non-integrable kernels, e.g., fractional Laplacians, where the underlying L\`{e}vy processes cannot be described by compound Poisson processes. Moreover, the well-posedness of BSDEs on bounded domains, with volume constraints, is also critical for studying nonlocal diffusion equations on bounded domains.

\bibliographystyle{siam}

\end{document}